\title{%
	On biharmonic functions on vector bundles
}
\author{%
	Mohamed Tahar Kadaoui Abbassi, Souhail Doua and Ibrahim Lakrini
}
\abstract{%
	This paper is devoted to the investigation of harmonic and biharmonic functions on vector bundles equipped with spherically symmetric metrics. We will study the biharmonicity of vertical lifts of functions as well as $r$-radial functions on vector bundle manifolds and, as a consequence, we will construct an infinite two-parameter family of proper biharmonic functions. 
}
\keywords{%
	Vector bundle, spherically symmetric metric, Sasaki metric, harmonic function, biharmonic function.
}
\begin{document}
	\allowdisplaybreaks
	
	
	\section*{Introduction}
	
	
	The study of harmonic and biharmonic functions constitutes an important problem in geometric analysis. To be precise, function theory on certain manifolds provides several insights into their geometries (see \cite{CM1,CM2,CM3,CM4,Y1,CY1}). Surprisingly, in spite of their importance, examples of harmonic and biharmonic functions are scarce, and examples of proper biharmonic functions are even scarcer. Even for the general theory of harmonic mappings, few examples of (proper) (bi-)harmonic mappings do exist. Interesting such examples are given by (proper) (bi-)harmonic vector fields and functions on tangent and unit tangent sphere bundles endowed with $g$-natural metrics (see \cite{Abb-Calv,Abb-Calv-Per1,Abb-Calv-Per2,Abb-Calv-Per3,Abb-Dou} and the references therein). In this paper, we are interested in finding examples of (proper) biharmonic functions.  
	
	In the context of geometry of vector bundles, the first and third authors studied harmonic function theory for a class of Riemannian metrics known as \emph{spherically symmetric metrics} (cf. \cite{Abb-Lak5}). They also used this to infer interesting results on the geometry of these spaces.  In particular, they studied the harmonicity of vertical lifts as well as a class of functions known as $r$-radial functions \cite{Abb-Lak5}. We will investigate, in this paper, the biharmonicity of these classes and we will explicitly construct an infinite two-parameter family of examples of proper biharmonic functions on vector bundle manifolds endowed with spherically symmetric metrics.
	
	The paper is organized as follows. The first section mainly serves as a background for the two sections to follow. It contains the necessary preliminaries on harmonic and biharmonic functions as well as spherically symmetric metrics on vector bundles.
	
	In the second section, we study the biharmonicity of functions vertically lifted to vector bundles when equipped with spherically symmetric metrics. We compute the bilaplacian of the vertical lift of a smooth function (Proposition \ref{bilaplacian_vertical}) and we derive a characterization of spherically symmetric metrics, with respect to which vertical lifts of (proper) biharmonic functions are (proper) biharmonic, allowing us to construct explicit examples (Proposition \ref{exp_vertical}). 
	
	In the third section, we investigate $r-$radial functions: we compute the bilaplacian (Proposition \ref{Bihar-radial}) and we reduce the complexity of the fourth order linear differential equation we obtain by studying the case of the Sasaki metric (Proposition \ref{Sasaki_radial} and Corollary \ref{Sasaki_radial2}). Consequently, we construct two infinite families of examples (Proposition \ref{Radial_Examples}) of biharmonic functions.
	
	All manifolds and geometric objects are assumed to be smooth and smooth will always mean differentiability of class $C^{\infty}$. Unless otherwise stated, all manifolds are assumed to be connected and without boundary. 
	
	
	\section{Preliminaries}
	
	
	\subsection{Harmonic and biharmonic functions}
	
	
	Let $(M,g)$ and $(N,h)$ be two Riemannian manifolds. Denote by $\nabla^{g}$ and $\nabla^{h}$ their Levi-Civita connections, and $m$ and $n$ their dimensions, respectively. We denote by $C^{\infty}(M,N)$ the set of smooth mappings from $M$ to $N$.
	
	For a smooth map $\phi:M\longrightarrow N$, the differential map   $d\phi:TM\longrightarrow TN$, of the map $\phi$, is a smooth section of the bundle $T^{*}M\otimes \phi^{-1}TN$, where $\phi^{-1}TN$ is the pullback of the vector bundle $TN$ by $\phi$. The \emph{energy density} of $\phi$ is the function defined by  $e(\phi)=\frac{1}{2}\|d\phi\|^{2}$, where $\|.\|$ is the \emph{Hilbert-Schmidt norm} of $\phi$. The energy density is locally expressed as
	\[e(\phi)=\frac{1}{2}tr_{g}(\phi^{*}h)=\frac{1}{2}\sum_{i=1}^{m}\phi^{*}h(e_{i},e_{i}),\]
	where $\{e_{i}:\: i=1,\ldots,m\}$ is any local orthonormal frame of $M$.
	
	For any compact subset $\Omega\subset M$, the energy of $\phi$ on $\Omega$ is given by 
	\begin{equation}
		E(\phi,\Omega)=\int_{\Omega}e(\phi)\upsilon_{g},
	\end{equation} 
	where $\upsilon_{g}$ is the Riemannian \emph{volume element} of the Riemannian manifold $(M,g)$.
	
	A \emph{smooth variation} of $\phi$ is a family of mappings $\{\phi_{t}\}_{-\epsilon<t<\epsilon}\subset C^{\infty}(M,N)$ which depends smoothly on $t$ such that 
	the mapping $F:(-\epsilon,\epsilon)\times M\longrightarrow N$ defined by $F(t,x)=\phi_{t}(x)$, for $-\epsilon<t<\epsilon$ and $x\in M$, satisfies 
	\begin{itemize}
		\item [i)]  $F$ is smooth,
		\item [ii)] $F(0,x)=\phi(x),$ for all $x\in M$.
	\end{itemize}
	Any smooth variation $\{\phi_{t}\}_{-\epsilon<t<\epsilon}$ of $\phi$ determines a smooth vector field along $\phi$, that is, a section of the vector bundle $\phi^{-1}TN$, defined by 
	\begin{equation}
		V(x)=\frac{d}{dt}\Big\arrowvert_{t=0}\phi_{t}(x),
	\end{equation}  
	for all $x\in M$. The vector field $V$ is called the \emph{variational vector field} of the variation $\{\phi_{t}\}$. 
	
	A smooth map $\phi:(M,g)\longrightarrow (N,h)$ is said to be a \emph{harmonic map} if, for every compact subset $\Omega$ of $M$, $\phi$ is a critical point of the energy functional $E(.,\Omega)$, i.e., for every compact subset $\Omega$ of $M$ and every variation $\{\phi_{t}\}_{-\epsilon<t<\epsilon}$ of $\phi$, such that $\phi=\phi_{0}$ and $\phi(x)=\phi_{t}(x)$ for all $x\in M\setminus \Omega$ and all $t$, we have 
	\begin{equation*}
		\frac{d}{dt}\Big\arrowvert_{t=0}E(\phi_{t},\Omega)=0.
	\end{equation*} 
	
	If $M$ is a compact manifold and  $\phi:M\longrightarrow N$ is a map, then we denote the energy $E(\phi,M)$ of $\phi$ on $M$ by $E(\phi)$ and, for every smooth variation $\{\phi_{t}\}_{-\epsilon<t<\epsilon}$ of $\phi$, we have
	$$\frac{d}{dt}\Big\arrowvert_{t=0}E(\phi_{t})=-\int_{M}h(V,\tau(\phi))\upsilon_{g},$$
	where $V$ is the variational vector field associated with the variation $\{\phi_{t}\}$ and $\tau(\phi)$ is a vector filed along $\phi$, called the tension field of $\phi$, and given by 
	$$\tau(\phi)=\sum_{i=1}^{m}\big(\phi^{*}\nabla^{h}_{e_{i}}d\phi(e_{i})-d\phi(\nabla^{g}_{e_{i}}e_{i})\big),$$
	where $\{e_{i}\}$ is a linear orthonormal frame of $M$.
	
	Hence, harmonic maps on compact manifolds are characterized
	as smooth maps with vanishing tension field. Since the equation $\tau(\phi)=0$ is tensorial, it can be generalized to non-compact manifolds. Indeed, a map $\phi:(M,g)\longrightarrow (N,h)$ is said to be harmonic if $\tau(\phi)=0$.
	For further details about harmonic maps, we refer to \cite{report}.
	
	Further, if $N=\mathbb{R}$, then for a function $\phi:M\longrightarrow \mathbb{R}$, the Euler-Lagrange equation $\tau(\phi)=0$ is nothing but the Laplace equation $\Delta_gf=0$, where $\Delta_g$ is the Laplace-Beltrami operator of the Riemannian manifold $(M,g)$. In this case, harmonic maps are just harmonic functions.  
	
	In \cite{Ee.se}, J. Eells and L. Lemaire extended  the notion of harmonic maps to \emph{biharmonic maps} on compact manifolds which are, by definition, critical points of the \emph{bienergy functional}:
	$$E_{2}(\phi)=\frac{1}{2}\int_{M} ||\tau(\phi)||^{2} \upsilon_{g}.$$
	G. Jiang (\cite{Ji}) derived the associated Euler-Lagrange equation of $E_{2}$,
	\begin{equation}\label{bitension_field}
		\widehat{\Delta}\tau(\phi)-\underset{i=1}{\overset{m}{\sum}}R^{N}\big(\tau(\phi),\phi_{*}e_{i}\big)\phi_{*}e_{i}=0,
	\end{equation}
	where $R^{N}$ is the Riemannian curvature tensor of $(N,h)$ and $\widehat{\Delta}$ is the rough Laplacian defined by: $\widehat{\Delta}=-\underset{i=1}{\overset{m}{\sum}}\big(\phi^*\nabla_{e_{i}}\phi^*\nabla_{e_{i}}-\phi^*\nabla_{\nabla_{e_{i}}e_{i}}\big)$, and $\phi^*\nabla$ is the induced connection on $\phi^{-1}TN$.\par
	
	The quantity 
	\[\tau_{2}(\phi):=\widehat{\Delta}\tau(\phi)-\underset{i=1}{\overset{m}{\sum}}R^{N}\big(\tau(\phi),\phi_{*}e_{i}\big)\phi_{*}e_{i}\]
	is called the \emph{bitension field of $\phi$}. As for the case of harmonic maps, when $(M,g)$ is a general Riemannian manifold (including the non-compact case), a map $\phi: (M,g) \rightarrow (N,h)$ is said to be \emph{biharmonic} if $\tau_2 (\phi)=0$.
	
	Furthermore, when $N=\mathbb{R}$ and $f:M\longrightarrow \mathbb{R}$ is a smooth function, then $\tau_2 (f)=\Delta_g^2(f)$, where $\Delta_g^2$ is the bilaplacian operator of $(M,g)$. The function $f$ is said to be a biharmonic function if $\Delta_g^2f=0.$
	
	
	\subsection{Vector bundles and spherically symmetric metrics}
	
	
	Let $(E,\pi, M)$ be a vector bundle of rank $k\geq 1$ with Riemannian base $(M,g)$. We assume that $E$ is endowed with a fiber metric $h$ and a compatible connection $D$
	(i.e., $Dh=0$). Let $K$ denote the connection map associated with $D$. Denote by $\mathcal{V}$ the \emph{vertical subbundle}, that is, the subbundle given by $$\mathcal{V}=\bigcup_{e\in E}V_{e}E,$$
	where, for all $e\in E$, $V_{e}E=\ker(d_{e}\pi)$ is the \emph{vertical subspace} of $T_{e}E$. The elements of $\mathcal{V}$ are called \emph{vertical vectors} and a vector field on $E$ is said to be a \emph{vertical vector field} if it lies completely in $\mathcal{V}$.
	
	Moreover, if $x\in M$ and $e, u\in \pi^{-1}(x)$, set $\gamma_{e,u}(t)=e+tu$ with $t\in (-\epsilon,\epsilon)$, then $\gamma_{e,u}(t)\in \pi^{-1}(x)$, for all $|t|<\epsilon$, then the \emph{vertical lift} of $u$ at $e$ is defined to be the vertical vector given by $u^{v}_{e}=\frac{d}{dt}\big\lvert_{t=0}\gamma_{e,u}(t)$. 
	Using vertical lifts of elements of $E$ to vertical vectors, we can lift sections of $E$. Indeed, if $\sigma$ is a section of $E$, its \emph{vertical lift} is the vertical vector field $\sigma^{v}$ defined by
	\begin{equation*}
		\sigma^{v}(e)=\sigma(\pi(e))^{v}_{e},
	\end{equation*}
	where $\sigma(\pi(e))^{v}_{e}$ is the vertical lift of $\sigma(\pi(e))$ at $e$. The vertical subbundle $\mathcal{V}$ is naturally isomorphic to the pullback of $E\overset{\pi}{\longrightarrow} M$ by $\pi$ which we denote by $\pi^{\star}E\longrightarrow E$ (see \cite{Dra-Per,Poor}).
	
	The connection $D$ gives rise, at each point $e\in E$, to the \emph{horizontal subspace} given by $H_{e}E=\ker{(K_{e})}$, which is complementary to the vertical subspace, i.e., $T_{e}E=V_{e}E\oplus H_{e}E$, which gives the \emph{horizontal subbundle} $$\mathcal{H}=\bigcup_{e\in E}H_{e}E,$$ such that  
	\begin{equation}\label{TE-Decomp}
		TE=\mathcal{H}\oplus \mathcal{V}.
	\end{equation}
	
	Elements of $\mathcal{H}$ are called \emph{horizontal vectors}, and a vector field is said to be a \emph{horizontal vector field} if it lies completely in $\mathcal{H}$. If $x\in M$ and $X\in T_{x}M$, the \emph{horizontal lift} of $X$ at $e$ is the horizontal vector $X^{h}_{e}$ satisfying $d_{e}\pi(X^{h}_{e})=X$. Further, if $X\in \mathfrak{X}(M)$, then the horizontal vector lift of $X$ is the horizontal vector field $X^{h}\in \mathfrak{X}(E)$ such that $(X^{h})_{e}$ is the horizontal lift of $X_{\pi(x)}$ at $e$. 
	The horizontal sub-bundle is naturally isomorphic to the pullback vector bundle $\pi^{*}TM\longrightarrow E$, thus $$\mathcal{H}\oplus \mathcal{V}=TE\simeq \pi^{*}TM\oplus \pi^{\star}E$$
	
	The later splitting gives rise to a splitting of vector fields of $E$. Indeed, for all $Z\in \mathfrak{X}(E)$, we have $Z=Z^{H}+Z^{V}$, where $Z^{H}$ is the horizontal component and $Z^{V}$ is the vertical component.
	
	In the context of Riemannian geometry of the tangent bundle of a Riemannian manifold, S. Sasaki introduced, in \cite{Sas1}, a Riemannian metric induced from the Riemannian metric on the base manifold known as the \emph{Sasaki metric}, we refer the reader to \cite{Abb1,Abb2} for details on the Riemannian geometry of tangent bundles. 
	It had been shown that the Sasaki metric presents a strong rigidity in the sense of \cite{Kow1,Mus-Tri}. In order to overcome such a rigidity, many classes of metrics were introduced (e.g., \cite{Ben-Lou-W, Mus-Tri}). 
	
	In the case of tangent bundles, the Sasaki metric can be easily expressed as follows. If $(M,g)$ is an $n$-dimensional Riemannian manifold, the Sasaki metric $G^s$ on the tangent bundle $TM$ of $M$ is given by:
	\begin{itemize}
		\item$G^s(X^{h},Y^{h})=g(X,Y)$
		\item$G^s(X^{h},Y^{v})=0$
		\item$G^s(X^{v},Y^{v})=g(X,Y)$
	\end{itemize}
	for all vector fields $X, Y$ on $M$.
	
	A more general class of Riemannian metrics on the total spaces of vector bundles, the so-called \emph{spherically symmetric metrics}, was introduced in \cite{Alb1}. Those metrics are defined as follows.
	\begin{equation}\label{SSM}
		G_{e}(X,Y)=e^{2\varphi_{1}}g(d\pi(X),d\pi(Y))+e^{2\varphi_{2}}h(K_{e}(X),K_{e}(Y)),
	\end{equation}
	where $\varphi_{1}$ and $\varphi_{2}$ are scalar functions on $E$ depending only on the squared norm of $e$, that is, $r=\|e\|^{2}=h(e,e)$. We require the $\varphi_{1}$ an $\varphi_{2}$, as well as all their successive derivatives, to be smooth on $(0,+\infty)$ and at $r=0$ on the right. Spherically symmetric metrics may be expressed using pullbacks as follows. 
	\begin{equation}\label{SSM1}
		G=e^{2\varphi_{1}}\pi^{*}g\oplus e^{2\varphi_{2}}\pi^{\star}h,
	\end{equation}
	
	In the case, $\varphi_1=\varphi_2=0$, we obtain the generalization of the Sasaki metric to vector bundle manifolds. Further, if $E=TM$, we obtain the Sasaki metric itself.
	
	Spherically symmetric metrics have been extensively studied in \cite{Alb1,Abb-Lak1,Abb-Lak2,Abb-Lak3,Abb-Lak5}. We shall recall some necessary technical tools we will be using in the forthcoming calculations. 
	
	In what follows, we will make use of the following notations:
	\begin{itemize}
		\item [(i)] $\langle .,. \rangle_{M}=\pi^{*}g$,
		\item [(ii)] $\langle .,. \rangle_{E}=\pi^{\star}h$,
		\item [(iii)]$\langle .,. \rangle=\pi^{*}g\oplus \pi^{\star}h$.
	\end{itemize}
	
	The Levi-Civita connection of the metric given in (\ref{SSM}) is computed in \cite{Alb1} and given as follows:
	
	\begin{proposition}\label{Levi-Civita}
		Let $X$ and $Y$ be two vector fields on $E$, then
		\begin{equation*}
			\tilde{\nabla}_{X}Y=\tilde{D}_{X}Y+C_{X}Y+A_{X}Y-\frac{1}{2}\mathcal{R}^{\xi}(X,Y),
		\end{equation*}
		where
		\begin{itemize}
			\item [i)]\: $\tilde{D}=\pi^{*}\nabla\oplus \pi^{\star}D$, where $\nabla$ is the Levi-Civita connection of $(M,g)$;
			\item [ii)] $C(.,.)$ is the vector valued form given by
			\begin{equation*}
				\begin{split}
					C_{X}Y=a\big(\xi^{\flat}(X)Y^{H}&+\xi^{\flat}(Y)X^{H}\big)+c_{1}\langle X^{H},Y^{H}\rangle \xi +c_{2}\langle X^{V},Y^{V}\rangle \xi\\ &+b\big(\xi^{\flat}(X)Y^{V}+\xi^{\flat}(Y)X^{V}\big),
				\end{split}
			\end{equation*}
			with
			$$\left\{
			\begin{array}{ll}
				a= 2\varphi_{1}', \qquad c_{1}=-2\varphi_{1}'e^{2(\varphi_{1}-\varphi_{2})}, \\
				b=2\varphi_{2}' , \qquad c_{2}=-2\varphi_{2}',
			\end{array}
			\right.
			$$
			for all $X$, $Y \in \mathfrak{X}(E)$, $\xi$ being the tautological section of the vertical sub-bundle defined by $\xi_{e}=e^{v}_{e}\in \pi^{\star}E$ and where ` $\flat$' is taken with respect to $h$;
			\item [iii)] \: $\mathcal{R}^{\xi}(X,Y)=\pi^{\star}R^{E}(X,Y)\xi$, for all $X$, $Y \in \mathfrak{X}(E)$ where $R^{E}$ is the curvature of $D$;
			\item [iv)] $A(.,.)$ is the $\pi^{*}TM$-valued form defined by
			$$e^{2\varphi_{1}}\langle A(X,Y),Z\rangle_{M}=\frac{e^{2\varphi_{2}}}{2}\left(\langle \mathcal{R}^{\xi}(X,Z),Y\rangle_{E}+\langle \mathcal{R}^{\xi}(Y,Z),X\rangle_{E}\right),$$
			for all $X$, $Y$ and $Z \in \mathfrak{X}(E)$.
		\end{itemize}
	\end{proposition}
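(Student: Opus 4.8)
The plan is to apply the Koszul formula for the metric $G$ and to reorganize its six terms into the four asserted blocks. Concretely, for $X,Y,Z\in\mathfrak{X}(E)$ one starts from
\begin{equation*}
2G(\tilde{\nabla}_{X}Y,Z)=XG(Y,Z)+YG(Z,X)-ZG(X,Y)+G([X,Y],Z)-G([Y,Z],X)+G([Z,X],Y),
\end{equation*}
and compares it, slot by slot, with the same formula written for the connection $\tilde{D}=\pi^{*}\nabla\oplus\pi^{\star}D$. Two facts make this tractable. First, $\tilde{D}$ is metric with respect to the unscaled pairing $\langle\cdot,\cdot\rangle=\langle\cdot,\cdot\rangle_{M}\oplus\langle\cdot,\cdot\rangle_{E}$, it preserves the splitting $TE=\mathcal{H}\oplus\mathcal{V}$, and its only obstruction to being torsion-free is the vertical curvature term: the bracket of two horizontal lifts has vertical part $-\mathcal{R}^{\xi}(X,Y)$, so that the torsion of $\tilde{D}$ reduces to $\mathcal{R}^{\xi}$, while brackets of vertical lifts vanish. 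Second, since $\varphi_{1},\varphi_{2}$ depend on $e$ only through $r=h(e,e)$, a short computation using $Dh=0$ (so that $r$ is constant along horizontal curves) and direct differentiation in the fibre gives $X(r)=2\,\xi^{\flat}(X)$, hence $X(\varphi_{i})=2\varphi_{i}'\,\xi^{\flat}(X)$. Because both sides of the claimed identity are determined by their values on horizontal and vertical lifts, it suffices to verify the cases in which $X,Y$ each range over $\mathcal{H}$ and $\mathcal{V}$, tested against $Z$ horizontal and vertical.

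I would first isolate the terms in which a derivative falls on one of the conformal factors $e^{2\varphi_{1}},e^{2\varphi_{2}}$. Using $X(\varphi_{i})=2\varphi_{i}'\xi^{\flat}(X)$, these three derivative contributions assemble exactly into the correction produced by two simultaneous conformal changes, one on $\mathcal{H}$ with factor $e^{2\varphi_{1}}$ and one on $\mathcal{V}$ with factor $e^{2\varphi_{2}}$. The $XG(Y,Z)$ and $YG(Z,X)$ pieces produce the $\xi^{\flat}(X)Y+\xi^{\flat}(Y)X$ pattern and, after separating horizontal and vertical components against $Z$, force $a=2\varphi_{1}'$ and $b=2\varphi_{2}'$. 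The $-ZG(X,Y)$ piece produces the $\xi^{\flat}(Z)$ pattern, i.e. the $-\langle X,Y\rangle\,\mathrm{grad}\,\varphi_{i}$ term; matching the left weight $e^{2\varphi_{2}}$ (coming from pairing $\xi$ against $Z^{V}$) with the right weights $e^{2\varphi_{1}},e^{2\varphi_{2}}$ gives $c_{1}=-2\varphi_{1}'e^{2(\varphi_{1}-\varphi_{2})}$ (the $e^{2\varphi_{1}}$ from the horizontal pairing and the $e^{-2\varphi_{2}}$ from raising the index vertically) and the clean value $c_{2}=-2\varphi_{2}'$. Collecting these four contributions reproduces precisely $C_{X}Y$.

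It then remains to treat the terms in which every derivative falls on an inner product with its conformal factor held constant, together with the three bracket terms. Grouping the $e^{2\varphi_{i}}$-weighted pieces, these reconstitute $e^{2\varphi_{1}}$ (resp. $e^{2\varphi_{2}}$) times the Koszul expression attached to $\tilde{D}$; the torsion-free symmetric part is $\tilde{D}_{X}Y$, and the residual curvature contributions are what must be split. The vertical, antisymmetric part of $[X,Y]$ yields directly the term $-\tfrac{1}{2}\mathcal{R}^{\xi}(X,Y)$. The subtle point, and the step I expect to be the main obstacle, is that $G$ scales $\mathcal{H}$ and $\mathcal{V}$ unequally, so the usual ``subtract half the torsion'' must be carried out with different weights: when one solves the Koszul identity for the horizontal component of $\tilde{\nabla}_{X}Y$, the vertical curvature brackets $[Y,Z]$ and $[X,Z]$ are paired through $\langle\cdot,\cdot\rangle_{E}$ whereas the left-hand side is weighted by $e^{2\varphi_{1}}\langle\cdot,\cdot\rangle_{M}$, producing exactly the horizontal field $A(X,Y)$ characterized by $e^{2\varphi_{1}}\langle A(X,Y),Z\rangle_{M}=\tfrac{e^{2\varphi_{2}}}{2}\bigl(\langle\mathcal{R}^{\xi}(X,Z),Y\rangle_{E}+\langle\mathcal{R}^{\xi}(Y,Z),X\rangle_{E}\bigr)$. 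Tracking these weights together with the antisymmetry of $R^{E}$ is the delicate bookkeeping; once the lift-cases are checked to agree with $\tilde{D}_{X}Y+C_{X}Y+A_{X}Y-\tfrac{1}{2}\mathcal{R}^{\xi}(X,Y)$, the uniqueness of the Levi-Civita connection furnished by the Koszul formula closes the argument.
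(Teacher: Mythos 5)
Your proposal is essentially correct, but it cannot be ``the same approach as the paper'' for a simple reason: the paper contains no proof of this proposition at all. It is background material, imported verbatim with the sentence ``The Levi-Civita connection of the metric given in (\ref{SSM}) is computed in \cite{Alb1},'' so the only proof on record is in that cited source, and your Koszul-formula derivation is the standard argument one would find there. Your key computations check out: the derivative rule $X(\varphi_i)=2\varphi_i'\,\xi^{\flat}(X)$ does make the three terms where the Koszul formula differentiates the conformal factors assemble into $C$, with the weight-matching you describe giving exactly $a=2\varphi_1'$, $b=2\varphi_2'$, $c_2=-2\varphi_2'$ and the twisted coefficient $c_1=-2\varphi_1'e^{2(\varphi_1-\varphi_2)}$ (the $e^{2\varphi_1}$ from pairing $X^H,Y^H$ against the $e^{-2\varphi_2}$ needed to express the result as a multiple of $\xi$); and the torsion of $\tilde D$, concentrated in the vertical part $-\mathcal{R}^{\xi}(X,Y)$ of brackets of horizontal lifts, does split into the antisymmetric correction $-\tfrac12\mathcal{R}^{\xi}(X,Y)$ plus the symmetric, weight-skewed term $A$. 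Two points should be made explicit to turn your sketch into a complete proof. First, the reduction to lifts needs the remark that $\tilde{\nabla}-\tilde{D}$ is a difference of connections, hence tensorial, so evaluating the Koszul formula with $X,Y,Z$ ranging over horizontal lifts of fields on $M$ and vertical lifts of sections of $E$ (which locally frame $TE$) indeed suffices. Second, since $R^{E}$ acts through $d\pi$, one has $\mathcal{R}^{\xi}(U,V)=0$ whenever either argument is vertical; consequently $A$ vanishes on two horizontal lifts and on two vertical lifts, and in the mixed cases $\tilde{\nabla}_{X^h}\sigma^v$ and $\tilde{\nabla}_{\sigma^v}X^h$ exactly one of the two curvature terms in the defining identity of $A$ survives --- this is the precise form of the ``delicate bookkeeping'' you flag, and spelling it out is what closes the argument. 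What your route buys over the paper's is a self-contained, checkable derivation of the coefficients; what the paper's citation buys is brevity appropriate to a preliminaries section.
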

	
	
	\subsection{Some classes of functions on vector bundles}
	
	
	Total spaces of vector bundles, and particularly tangent bundles, can be endowed with different classes of functions. We shall focus on lifted functions, that is, functions on total spaces constructed from functions on the base manifold.
	
	In the general context of vector bundles the first and third authors studied, in \cite{Abb-Lak5}, the harmonicity of the following classes of functions:
	\begin{itemize}
		\item [$(\mathcal{C}_1)$] Vertical lifts of functions on $M$, that is, functions of the form $f^{v}=f\circ \pi$ where $f\in C^{\infty}(M)$.
		\item [$(\mathcal{C}_2)$] Functions of the form $F:E\longrightarrow \mathbb{R}$ such that $F(e)=\alpha(r)$, where $r=h(e,e)$ and $\alpha:\mathbb{R}^{+}\longrightarrow \mathbb{R}$ is a real valued function which is smooth on $(0,+\infty)$ and at $r=0$ on the right. We refer to those functions as $r$-radial functions.
		\item [$(\mathcal{C}_3)$] Functions of the form $F_{\sigma}:E\longrightarrow\mathbb{R}$ such that 
		$F_{\sigma}(e)=h(\sigma(\pi(e)),e)$, where $\sigma\in \Gamma(E)$ is a section.
	\end{itemize}
	
	Henceforward, we shall always endow the total space of a vector bundle with a spherically symmetric metric $G$, with weight functions $\varphi_{1}$ and $\varphi_{2}$, of the form \eqref{SSM} (or equivalently, of the form \eqref{SSM1}).
	
	In what follows, we investigate biharmonic functions on total spaces of vector bundles endowed with spherically symmetric metrics. So, we assume that $(E,\pi,M)$ is a vector bundle for which $E$ is endowed with a spherically symmetric metrics of the form \eqref{SSM1}. We shall begin with vertical lifts then we consider $r$-radial functions.
	

	\section{Biharmonicity of vertical lifts of functions to vector bundles}
	

	We shall make calculations in a particularly suitable orthonormal frame which we construct as follows. Let $e \in E$ with $x=\pi(e)$, and let $\{e_i:\: i=1,\ldots,m\}$ be a local linear orthonormal frame of $M$ in a neighbourhood of $x$. For $i=1,\ldots,m$, we denote by $e_i^h$ the horizontal lift of $e_i$ to $E$ with respect to the connection $D$. On the other hand, let $\{\sigma_p:\:p=1,\ldots,k\}$ be an orthonormal frame of $E$ and let $\sigma_p^v$, for $p=1,\ldots,k$, be the vertical lift of $\sigma_{p}$. Then set 
	\begin{equation}\label{Basis}
		E_i=e^{-\varphi_1}e_i^h \quad \textup{ and } \quad E_{m+p}=e^{-\varphi_2}\sigma_p^v, \quad i=1,\ldots,m,\; p=1,\ldots,k;
	\end{equation}
	therefore $\{E_I:\: I=1,\ldots,m+k\}$ is a local linear orthonormal frame of $E$ in a neighbourhood of $e$. We refer to such frames as the \emph{induced local frames} on $E$.
	
	For the sake of completeness, we recall the following lemma (see \cite{Alb1}) which gives the directional derivatives of functions depending on $r$ in the direction of horizontal and vertical vectors:   
	\begin{lemma}\label{Der-fun}
		Let $\alpha$ be a smooth real scalar function. Then, for any horizontal (resp. vertical) vector $X^H$ (resp. $Y^V$) on $E$, we have
		\begin{itemize}
			\item [i)] $X^H(\alpha(r))=0$;
			\item [ii)] $Y^V(\alpha(r))=2\alpha^\prime(r)\xi^{\flat}(Y^V)$.
		\end{itemize}
	\end{lemma}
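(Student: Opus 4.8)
The plan is to reduce both statements to a computation of the action of the relevant vector fields on the base function $r=h(e,e)$ by means of the chain rule, since $X(\alpha(r))=\alpha'(r)\,X(r)$ for any vector field $X$ on $E$ and any smooth $\alpha$. Thus it suffices to establish that $X^H(r)=0$ for horizontal vectors and $Y^V(r)=2\xi^\flat(Y^V)$ for vertical vectors; multiplying each identity by $\alpha'(r)$ then yields both claims at once.

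For part (i), I would exploit the compatibility $Dh=0$ of the connection with the fiber metric. Given a horizontal vector $X^H_e$, write it as the velocity at $t=0$ of the $D$-parallel section $e(t)$ obtained by parallel transporting $e$ along a curve $c$ in $M$ with $c'(0)=d_e\pi(X^H_e)$; the velocity of such a parallel section is horizontal precisely because $\mathcal{H}=\ker K$. Since parallel transport preserves $h$, the quantity $h(e(t),e(t))=r(e(t))$ is constant in $t$, so $X^H_e(r)=\frac{d}{dt}\big|_{t=0}r(e(t))=0$.

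For part (ii), I would use the explicit description of vertical lifts. Any vertical vector at $e$ has the form $Y^V=u^v_e=\frac{d}{dt}\big|_{t=0}(e+tu)$ for some $u\in\pi^{-1}(\pi(e))$, whence
\[
Y^V(r)=\frac{d}{dt}\Big|_{t=0}h(e+tu,\,e+tu)=\frac{d}{dt}\Big|_{t=0}\big(h(e,e)+2t\,h(e,u)+t^2h(u,u)\big)=2h(e,u).
\]
Under the natural isomorphism $\mathcal{V}\cong\pi^\star E$, the tautological section $\xi$ corresponds to $e$ itself and $Y^V$ to $u$, so, with $\flat$ taken with respect to $h$, one has $\xi^\flat(Y^V)=h(e,u)$. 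Therefore $Y^V(r)=2\xi^\flat(Y^V)$, as required.

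There is no serious obstacle here, as the lemma is essentially a direct computation. The only points deserving care are the correct identification of vertical vectors with elements of the fiber, so that $\xi^\flat(Y^V)=h(e,u)$ holds under the isomorphism $\mathcal{V}\cong\pi^\star E$, and the clean use of metric compatibility (rather than an explicit coordinate computation) to dispatch the horizontal case. Both identities hold pointwise for the generating horizontal and vertical lifts, hence, by linearity, for arbitrary horizontal and vertical vector fields on $E$.
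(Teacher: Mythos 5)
Your proof is correct. Note that the paper itself does not prove this lemma at all: it is recalled verbatim from the reference \cite{Alb1}, so there is no internal proof to compare against. Your argument supplies exactly the standard computation that the citation hides: the chain rule reduces everything to $X^H(r)=0$ and $Y^V(r)=2\xi^{\flat}(Y^V)$; the horizontal case follows from metric compatibility $Dh=0$ via parallel transport (every horizontal vector being the velocity of a $D$-parallel section, since $\mathcal{H}=\ker K$); and the vertical case follows from differentiating $h(e+tu,e+tu)$ directly, with $\xi^{\flat}(Y^V)=h(e,u)$ under the identification $\mathcal{V}\simeq\pi^{\star}E$ used throughout the paper. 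Both reductions are watertight, and your closing remark that every horizontal (resp.\ vertical) vector actually arises as such a velocity (resp.\ vertical lift) is what makes the pointwise argument suffice, so no separate linearity step is even needed.
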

	
	We will also be using the following formulas for the directional derivatives of the vertical lift of a function in the direction of horizontal and vertical vector fields:
	\begin{itemize}
		\item [$\bullet$] For every $f\in \mathcal{C}^\infty(M)$ and every vertical vector field $X^V$, we have 
		$X^V(f^v)=0$.
		\item [$\bullet$] For every $f\in \mathcal{C}^\infty(M)$ and every vector field $X\in \mathfrak{X}(M)$, we have 
		$X^h(f^v)=(Xf)^v$.
	\end{itemize}
	
	Using induced local frames of the form (\ref{Basis}) and Lemma \ref{Der-fun}, in \cite{Abb-Lak5}, the authors proved the following formulas:
	\begin{itemize}
		\item [$\bullet$] For every $f\in \mathcal{C}^\infty(M)$, we have   $\tilde{\nabla}f^{v}=e^{-2\varphi_{1}}(\nabla f)^{h}.$ 
		\item [$\bullet$] For every $f\in \mathcal{C}^\infty(M)$, we have $\Delta_{G}(f^{v})=e^{-2\varphi_{1}}(\Delta_{g} f)^{v}.$
		\item [$\bullet$] For every  $r$-radial function $F(e)=\alpha(r)$, we have 
		$\tilde{\nabla}F=2e^{-2\varphi_{2}}\alpha' \xi,$ where $\xi$ is the tautological vector field defined earlier. 
		\item [$\bullet$] We have that $div(\xi)=2mr\varphi_{1}'+(1+2r\varphi_{2}')k$.
	\end{itemize}
	
	Using all the previous formulas, we have:
	\begin{proposition} \label{bilaplacian_vertical}
		For every smooth function $f$ on $M$, we have 
		\begin{align*}
			\Delta_G^2\left(f^v\right)&=e^{-4 \varphi_1}\left(\Delta_g^2 f\right)^v-4 e^{-2\left(\varphi_1+\varphi_2\right)}\Big(2r\varphi_1^{\prime \prime}-4 r \varphi_1^{\prime}\left(\varphi_1+\varphi_2\right)^{\prime} \\ & \hspace{7cm}+2 m r(\varphi_1^{\prime})^2+2kr \varphi_1^{\prime} \varphi_2^{\prime}+k \varphi_1^{\prime}\Big)(\Delta_g f)^v.
		\end{align*}
	\end{proposition}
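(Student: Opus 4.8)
The plan is to obtain $\Delta_G^2(f^v)$ by iterating the first-order identity $\Delta_G(f^v)=e^{-2\varphi_1}(\Delta_g f)^v$ already recalled above. Applying $\Delta_G$ a second time gives $\Delta_G^2(f^v)=\Delta_G\big(e^{-2\varphi_1}(\Delta_g f)^v\big)$, so the entire computation reduces to taking the Laplace--Beltrami operator (which here is $\Delta_G=\operatorname{div}\circ\,\tilde\nabla$) of the product of the $r$-radial function $u=e^{-2\varphi_1}$ with the vertical lift $w=(\Delta_g f)^v$. First I would expand this using the Leibniz rule $\Delta_G(uw)=(\Delta_G u)\,w+u\,\Delta_G w+2\langle\tilde\nabla u,\tilde\nabla w\rangle$, which follows at once from $\tilde\nabla(uw)=u\,\tilde\nabla w+w\,\tilde\nabla u$ and the definition of the divergence.

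The crucial simplification is that the cross term drops out. Since $u$ depends only on $r$, the $r$-radial gradient formula $\tilde\nabla F=2e^{-2\varphi_2}\alpha'\xi$ shows that $\tilde\nabla u$ is a vertical field, a multiple of $\xi$, whereas $\tilde\nabla w=e^{-2\varphi_1}(\nabla \Delta_g f)^h$ is horizontal; because $\mathcal H$ and $\mathcal V$ are $G$-orthogonal in the decomposition \eqref{TE-Decomp}, we get $\langle\tilde\nabla u,\tilde\nabla w\rangle=0$. For the second surviving term I would apply the vertical-lift Laplacian formula once more to $w$, giving $u\,\Delta_G w=e^{-2\varphi_1}\!\cdot e^{-2\varphi_1}(\Delta_g^2 f)^v=e^{-4\varphi_1}(\Delta_g^2 f)^v$, which is precisely the leading term of the claimed identity.

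It remains to evaluate $\Delta_G u$ for the $r$-radial function $u=e^{-2\varphi_1}$, and I expect this to carry the bulk of the work. Taking $\alpha(r)=e^{-2\varphi_1(r)}$ in the gradient formula yields $\tilde\nabla u=\beta\,\xi$ with $\beta=-4\varphi_1'\,e^{-2(\varphi_1+\varphi_2)}$, so that $\Delta_G u=\operatorname{div}(\beta\,\xi)=\beta\operatorname{div}(\xi)+\xi(\beta)$. The divergence $\operatorname{div}(\xi)=2mr\varphi_1'+(1+2r\varphi_2')k$ is already available, and Lemma \ref{Der-fun}(ii) applied to $\xi$, for which $\xi^{\flat}(\xi)=h(e,e)=r$ (the `$\flat$' being taken with respect to $h$ as in Proposition \ref{Levi-Civita}), gives $\xi(\beta)=2r\,\beta'$.

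The last step is to substitute $\beta$ and its derivative $\beta'=e^{-2(\varphi_1+\varphi_2)}\big(-4\varphi_1''+8\varphi_1'(\varphi_1+\varphi_2)'\big)$ into $\beta\operatorname{div}(\xi)+2r\beta'$ and multiply by $w=(\Delta_g f)^v$. The main obstacle is purely the algebraic bookkeeping: expanding the product, collecting the five monomials in $\varphi_1''$, $\varphi_1'(\varphi_1+\varphi_2)'$, $(\varphi_1')^2$, $\varphi_1'\varphi_2'$ and $\varphi_1'$, and factoring out $-4e^{-2(\varphi_1+\varphi_2)}$ so as to recover the bracket $\big(2r\varphi_1''-4r\varphi_1'(\varphi_1+\varphi_2)'+2mr(\varphi_1')^2+2kr\varphi_1'\varphi_2'+k\varphi_1'\big)$ displayed in the statement. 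Adding the two surviving contributions $(\Delta_G u)\,w$ and $u\,\Delta_G w$ then yields the asserted formula.
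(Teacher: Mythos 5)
Your proposal is correct and follows essentially the same route as the paper's proof: expand $\Delta_G\big(e^{-2\varphi_1}(\Delta_g f)^v\big)$ by the Leibniz rule, kill the cross term by the $G$-orthogonality of the vertical gradient of $e^{-2\varphi_1}$ and the horizontal gradient of $(\Delta_g f)^v$, reuse the vertical-lift Laplacian formula, and compute $\Delta_G(e^{-2\varphi_1})=\operatorname{div}\big(-4\varphi_1'e^{-2(\varphi_1+\varphi_2)}\xi\big)$ via the divergence of $\xi$ and the radial derivative rule $\xi(\beta)=2r\beta'$. Your packaging of the last step through $\beta$ and $\beta'$ is only a notational variant of the paper's term-by-term expansion, and your algebra reproduces the stated bracket exactly.
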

	\begin{proof}
		Let $f:M\longrightarrow\mathbb{R}$ be a smooth function. Then 
		\begin{align*}
			\Delta_{G}(\Delta_{G} f^{v})&=\Delta_{G}\big(e^{-2\varphi _{1}}(\Delta_{g} f)^{v}\big)\\
			&=\Delta_{G}(e^{-2\varphi _{1}})(\Delta_{g} f)^{v}+e^{-2\varphi _{1}}\Delta_{G}\big((\Delta_{g} f)^{v}\big)+2G\big(\tilde{\nabla}(e^{-2\varphi _{1}}),\tilde{\nabla}((\Delta_{g} f)^{v}) \big).
		\end{align*}
		On the other hand, we have
		\begin{align*}
			G\big(\tilde{\nabla}(e^{-2\varphi _{1}}),\tilde{\nabla}((\Delta_{g} f)^{v}) \big) &=G\big(-4\varphi _{1}^{'}e^{-2(\varphi _{1}+\varphi _{2})}\xi, e^{-2\varphi _{1}} (\nabla \Delta_{g}f)^{h} \big)\\
			&=-4\varphi _{1}^{'}e^{-2(2\varphi _{1}+\varphi _{2})} G\big(\xi, (\nabla \Delta_{g}f)^{h} \big)=0, 
		\end{align*}
		\begin{align*}
			\Delta_{G}\big((\Delta_{g} f)^{v}\big)&=e^{-2\varphi _{1}} (\Delta_{g}^2 f)^{v}
		\end{align*}
		and 
		\begin{align*}
			\Delta_{G}\big(e^{-2\varphi _{1}}\big)&=div(\tilde{\nabla}(e^{-2\varphi _{1}}))\\
			&=-4div(\varphi _{1}^{'}e^{-2(\varphi _{1}+\varphi _{2})}\xi)\\
			&=-4\xi.(\varphi _{1}^{'}e^{-2(\varphi _{1}+\varphi _{2})})-4\varphi _{1}^{'}e^{-2(\varphi _{1}+\varphi _{2})} div(\xi)\\
			&=-4 \xi.(\varphi _{1}^{'})e^{-2(\varphi _{1}+\varphi _{2})}-4\varphi _{1}^{'} \xi.(e^{-2(\varphi _{1}+\varphi _{2})})-4\varphi _{1}^{'}e^{-2(\varphi _{1}+\varphi _{2})} div(\xi)\\
			&=-8r\varphi _{1}^{''}e^{-2(\varphi _{1}+\varphi _{2})}+16r\varphi _{1}^{'}(\varphi _{1}+\varphi _{2})^{'}e^{-2(\varphi _{1}+\varphi _{2})}\\
			&\hspace{2cm}- 4\varphi _{1}^{'}\big(2mr\varphi _{1}^{'}+(1+2r\varphi _{2}^{'})k\big)e^{-2(\varphi _{1}+\varphi _{2})}\\
			&=-4e^{-2(\varphi _{1}+\varphi _{2})}\big(2r\varphi _{1}^{''}-4r\varphi _{1}^{'}(\varphi _{1}+\varphi _{2})^{'}+2mr(\varphi _{1}^{'})^{2}+2kr\varphi _{1}^{'}\varphi _{2}^{'}+k\varphi _{1}^{'}\big).    
		\end{align*}
		
		Finally, we get 
		\begin{align*}
			\Delta_{G}^2(f^{v})&= e^{-4\varphi _{1}}(\Delta_{g}^2 f)^{v}-4e^{-2(\varphi _{1}+\varphi _{2})}\Big(2r\varphi _{1}^{''}-4r\varphi _{1}^{'}(\varphi _{1}+\varphi _{2})^{'}\\ & \hspace{7cm}+ 2mr(\varphi _{1}^{'})^{2}+2kr\varphi _{1}^{'}\varphi _{2}^{'}+k\varphi _{1}^{'}\Big)(\Delta_{g} f)^{v}.    
		\end{align*}
		
	\end{proof}
	A straightforward result can be formulated as follows: If $f:M\longrightarrow \mathbb{R}$ is a harmonic function, then $f^v$ is a biharmonic function on $(E,G)$. 
	
	In what follows, we shall make certain choices of the functions $\varphi_1$ and $\varphi_2$ in order to construct (proper) biharmonic functions on $E$ from (proper) biharmonic functions on $M$. 
	
	We remark that the vertical lift of a (proper) biharmonic function on $M$ is a (proper) biharmonic function on $E$ if and only if $$(\mathcal{E})\quad2r\varphi _{1}^{''}-4r\varphi _{1}^{'}(\varphi _{1}+\varphi _{2})^{'}+2mr(\varphi _{1}^{'})^{2}+2rk\varphi _{1}^{'}\varphi _{2}^{'}+k\varphi _{1}^{'}=0.$$ Hence, we shall focus on finding weight functions $\varphi _{1}$ and $\varphi _{2}$ that satisfy the previous equation.  
	
	\begin{corollary}
		Assume that $\varphi_1$ is constant and let $f:M\longrightarrow \mathbb{R}$ be a (proper) biharmonic function on $(M,g)$, then $f^v$ is also a (proper) biharmonic function on $(E,G)$. 
	\end{corollary}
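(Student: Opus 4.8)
The plan is to specialize the formula of Proposition \ref{bilaplacian_vertical} to the hypothesis that $\varphi_1$ is constant. When $\varphi_1$ is constant its first and second derivatives vanish, so $\varphi_1' = 0$ and $\varphi_1'' = 0$. Substituting these into the coefficient $(\mathcal{E})$ that multiplies $(\Delta_g f)^v$, I would observe that each of its five summands $2r\varphi_1''$, $-4r\varphi_1'(\varphi_1+\varphi_2)'$, $2mr(\varphi_1')^2$, $2kr\varphi_1'\varphi_2'$ and $k\varphi_1'$ carries at least one factor of $\varphi_1'$ or $\varphi_1''$, so the whole coefficient vanishes identically. Hence Proposition \ref{bilaplacian_vertical} collapses to the simple relation $\Delta_G^2(f^v) = e^{-4\varphi_1}(\Delta_g^2 f)^v$.

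From here the biharmonicity claim is immediate: since $f$ is biharmonic on $(M,g)$ we have $\Delta_g^2 f = 0$, whence $(\Delta_g^2 f)^v = 0$ and therefore $\Delta_G^2(f^v) = 0$, i.e. $f^v$ is biharmonic on $(E,G)$. To upgrade this to properness I would invoke the formula $\Delta_G(f^v) = e^{-2\varphi_1}(\Delta_g f)^v$ recalled earlier in the text. Because $\varphi_1$ is a finite constant, the scalar factor $e^{-2\varphi_1}$ is nowhere zero; and because $f$ is \emph{proper} biharmonic, $\Delta_g f \not\equiv 0$, so its vertical lift $(\Delta_g f)^v$ is not identically zero either. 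Consequently $\Delta_G(f^v) \not\equiv 0$, so $f^v$ fails to be harmonic, and together with $\Delta_G^2(f^v)=0$ this exhibits $f^v$ as a proper biharmonic function.

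There is little genuine difficulty here, the substantive content being already contained in Proposition \ref{bilaplacian_vertical}; the corollary is essentially a reading of that formula under $\varphi_1' = \varphi_1'' = 0$. The only point that calls for a moment's care is the properness clause, where one must confirm that the vertical lift of the nonzero function $\Delta_g f$ stays nonzero. This is immediate once I note that a vertical lift is $u^v = u\circ\pi$ and that $\pi$ is surjective, so $u^v$ vanishes precisely when $u$ does; applying this to $u = \Delta_g f$ closes the argument.
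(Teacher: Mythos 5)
Your proposal is correct and follows essentially the same route as the paper: observing that every term of the coefficient $(\mathcal{E})$ vanishes when $\varphi_1'=\varphi_1''=0$, and then using $\Delta_G(f^v)=e^{-2\varphi_1}(\Delta_g f)^v$ to obtain properness. Your added remark that $(\Delta_g f)^v = (\Delta_g f)\circ\pi$ is nonvanishing because $\pi$ is surjective is a small extra precision the paper leaves implicit, but it does not change the argument.
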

	\begin{proof}
		If $\varphi_1$ is constant, then the equation ($\mathcal{E}$) is trivially verified. Furthermore, if $f$ is a proper biharmonic function, then in particular $f^v$ is biharmonic, and since we have that $\Delta_G f^v=e^{-2\varphi_1} (\Delta_g f)^v$ and $\Delta_g f\neq 0$, then $\Delta_G f^v\neq 0$, which implies that $f^v$ is also a proper biharmonic function. 
	\end{proof}
	
	\begin{example}
		Consider Euclidean space without the origin $M=\mathbb{R}^{n}\setminus\{0\}$ with its Euclidean metric $g_0$. Let $f:\mathbb{R}^{n} \rightarrow \mathbb{R}$ be the polynomial function defined by 
		$$f(x_1,\ldots,x_n) = \Big(\sqrt{x_1^2+\cdots+x_n^2}\Big)^{-1}$$ 
		A straightforward calculation shows that $$\Delta_{g_0}^2f=\frac{3(n-5)(n-3)}{\Big(\sqrt{x_1^2+\cdots+x_n^2}\Big)^{5}}$$
		If $n=3$ or $n=5$, then  $f$ is a biharmonic function on $(M,g_0)$. Further, we prove that $\Delta_{g_0} f=(3-n)f^3$, so if $n=5$, the function is in fact proper biharmonic.
		
		Now, let $\pi:E\longrightarrow M$ be an arbitrary vector bundle and assume either $n=3$ or $n=5$. We endow $E$ with the spherically symmetric metric $G=\pi^{*}g_0\oplus e^{2\varphi_{2}}\pi^{\star}g_0$, where $\varphi_{2}$ is an arbitrary function on $[0,+\infty)$ satisfying the regularity requirements of a weight function of a spherically symmetric metric. Then, the function $f^v:E\longrightarrow \mathbb{R}$ is a biharmonic function on $(E,G)$. Furthermore, when $n=5$, the function $f^v:E\longrightarrow \mathbb{R}$ is a proper biharmonic function.
	\end{example}
	
	Trying to solve equation ($\mathcal{E}$) in various particular cases leads to a differential equation for which the solutions are singular at zero, which contradicts the regularity requirements for the weight functions of a spherically symmetric metric. 
	
	To make this point clearer, we suggest the following example. Remark that equation $(\mathcal{E})$ may be rewritten as 
	$$(\mathcal{E}^\prime)\quad 2r\varphi _{1}^{''}+2r(m-2)(\varphi _{1}^{'})^{2}+2r(k-2)\varphi _{1}^{'}\varphi _{2}^{'}+k\varphi _{1}^{'}=0.$$
	\begin{proposition}\label{exp_vertical}
		In the same notation as before, assume that
		$m=2$ and $k=2$. If, for every biharmonic function $f$ on $M$    the vertical lift $f^v$ is a biharmonic function on $E$, then $\varphi_1$ is constant. 
	\end{proposition}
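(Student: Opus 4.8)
The plan is to substitute the biharmonicity hypothesis into Proposition \ref{bilaplacian_vertical} and then analyse the resulting singular ordinary differential equation for $\varphi_1$ under the smoothness constraint at $r=0$. Since a biharmonic $f$ satisfies $\Delta_g^2 f=0$, the first term of the bilaplacian formula drops out and we are left with
\begin{equation*}
\Delta_G^2(f^v)=-4\,e^{-2(\varphi_1+\varphi_2)}\,B(r)\,(\Delta_g f)^v,
\end{equation*}
where $B(r)=2r\varphi_1''-4r\varphi_1'(\varphi_1+\varphi_2)'+2mr(\varphi_1')^2+2kr\varphi_1'\varphi_2'+k\varphi_1'$ is precisely the left-hand side of $(\mathcal{E})$ and is a function of $r$ alone.

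The next step is to turn the hypothesis, which is quantified over \emph{all} biharmonic $f$, into the pointwise vanishing of $B$. I would fix a biharmonic function $f$ on $M$ together with a point $x_0\in M$ at which $\Delta_g f(x_0)\neq 0$, and restrict the identity above to the fibre $\pi^{-1}(x_0)$. On this fibre $e^{-2(\varphi_1+\varphi_2)}>0$ and $(\Delta_g f)^v\equiv\Delta_g f(x_0)$ is a nonzero constant, whereas $r=\|e\|^2$ runs through the whole of $[0,+\infty)$ as $e$ varies over the ($k=2$)-dimensional fibre. Hence $\Delta_G^2(f^v)=0$ forces $B(r)=0$ for every $r\geq 0$, that is, equation $(\mathcal{E})$, equivalently $(\mathcal{E}')$, holds identically.

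With $m=k=2$ the equation $(\mathcal{E}')$ degenerates drastically: the coefficients $2r(m-2)$ and $2r(k-2)$ both vanish and $k\varphi_1'=2\varphi_1'$, so $(\mathcal{E}')$ reduces to $2r\varphi_1''+2\varphi_1'=0$, that is $(r\varphi_1')'=0$. Integrating yields $r\varphi_1'(r)=C$ for a constant $C$, so that $\varphi_1'(r)=C/r$ on $(0,+\infty)$ and $\varphi_1$ is an affine function of $\log r$. Invoking the regularity requirement on weight functions, namely that $\varphi_1$ and all its derivatives extend smoothly to $r=0$ on the right, we see that $\varphi_1'(r)=C/r$ is unbounded as $r\to 0^+$ unless $C=0$; smoothness of $\varphi_1'$ at the origin therefore forces $C=0$. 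Thus $\varphi_1'\equiv 0$ and $\varphi_1$ is constant.

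The only delicate point in this scheme is the middle step: one must secure a biharmonic function on $M$ whose Laplacian is not identically zero, so that the scalar factor $(\Delta_g f)^v$ can be cancelled and information about $B$ extracted. On a closed $M$ every biharmonic function is harmonic, so such an $f$ need not exist; this is where the tacit assumption that $M$ carries a proper biharmonic function, as in the Euclidean Example above, enters. Once such an $f$ is fixed, the reduction to $(r\varphi_1')'=0$ and the exclusion of the logarithmic solution by smoothness at the origin are immediate, so the genuine content of the statement is precisely this regularity obstruction at $r=0$.
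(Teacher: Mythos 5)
Your argument is correct and follows the same core route as the paper's proof: reduce the hypothesis to equation $(\mathcal{E}')$, observe that for $m=k=2$ it collapses to $r\varphi_1''+\varphi_1'=0$, integrate to $\varphi_1'=C/r$ on $(0,+\infty)$, and use the regularity of weight functions at $r=0$ to force $C=0$. (The paper's displayed equation $r\varphi_1''+\varphi_1=0$ is a typo for this; its subsequent integration, via $\psi=\varphi_1'$ and $\psi=\alpha/r$, treats the equation exactly as you do.) What you add beyond the paper is worthwhile on both counts. Your fibre-restriction step --- fixing $x_0$ with $\Delta_g f(x_0)\neq 0$ and letting $r$ sweep $[0,+\infty)$ over the two-dimensional fibre $\pi^{-1}(x_0)$ --- is precisely the justification needed to pass from the single scalar equation $\Delta_G^2(f^v)=0$ on $E$ to the identity $B(r)\equiv 0$ in the variable $r$ alone; the paper takes this for granted through its unproved remark preceding the corollary. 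And your closing caveat is a genuine one: if $M$ is closed, every biharmonic function on $M$ is harmonic, hence constant, so the hypothesis of the proposition is satisfied by \emph{every} choice of $\varphi_1$ while the conclusion fails; the statement is therefore only correct under the tacit assumption --- recorded nowhere in the paper --- that $M$ carries a biharmonic function whose Laplacian does not vanish identically. Making that hypothesis explicit, as you do, is exactly the right repair, and with it your proof is complete.
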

	\begin{proof}
		Under the conditions $m=2$ and $k=2$, that is, $E$ is a plane bundle over a Riemannian surface, equation $(\mathcal{E}^\prime)$ becomes 
		$$r \varphi_1^{''}+\varphi_1=0.$$
		Set $\psi=\varphi_1'$, the equation becomes a first order differential equation which can be integrated, for $r\neq 0$, as follows  $$\psi(r)=\alpha e^{-\int \frac{dr}{r}},$$
		which gives 
		$$\psi(r)=\frac{\alpha}{r},$$
		for every $r\in ]0,+\infty[$, where $\alpha$ is a real number. Finally we obtain, for every $r\in ]0,+\infty[$,  $\varphi_1(r)=\alpha\ln(r)+\beta$, with $\alpha$ and $\beta$ are real numbers. 
		
		By the definition of spherically symmetric metrics, $\varphi_1$ should necessarily be continuous at $0$, in particular, it should have a finite limit at $0$, therefore $\alpha=0$, otherwise $f$ will have an infinite limit at zero on the right. 
	\end{proof}
	
	
	\section{Biharmonicity of \texorpdfstring{$r$}{r}-radial functions}
	
	
	In this section, we accomplish a study of the biharmonicity of $r$-radial functions. For the rest of this section, let $\alpha:\mathbb{R}^{+}\longrightarrow \mathbb{R}$ be a real valued function, which is smooth on $(0,+\infty)$ and smooth at $r=0$ on the right, and likewise for all its successive derivatives; so the associated $r$-radial function $F:E\longrightarrow \mathbb{R}$ is given by $F(e)=\alpha(r)$ for $e\in E$, with $r=h(e,e)$.
	
	Along with Lemma \ref{Der-fun}, the following lemma will be used in calculations and can be found in \cite{Alb1} along with its proof. We recall it for the sake of completeness. 
	
	\begin{lemma}\label{Der-xi}
		If $X\in \mathfrak{X}(E)$, then $\pi^{\star}D_{X}\xi=X^{V}$, where $X^{V}$ is the vertical component of $X$.
	\end{lemma}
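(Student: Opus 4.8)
The plan is to exploit the fact that both sides of the claimed identity are tensorial (pointwise $C^{\infty}(E)$-linear) in $X$: the left-hand side $\pi^{\star}D_{X}\xi$ is function-linear in the slot $X$ because it is a covariant derivative, and the right-hand side $X\mapsto X^{V}$ is the vertical projection, which is manifestly $C^{\infty}(E)$-linear. Consequently it suffices to verify the equality when $X$ runs over the horizontal lifts $e_{i}^{h}$ and the vertical lifts $\sigma_{p}^{v}$ underlying the induced frame \eqref{Basis}, which together span $TE$; the general case then follows by linearity.

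Next I would record a local description of $\xi$. Choosing the local frame $\{\sigma_{p}\}$ of $E$ together with the associated natural fibre coordinates $u^{p}$ on $E$ (so that $e=\sum_{p}u^{p}(e)\,\sigma_{p}(\pi(e))$), the tautological section reads $\xi=\sum_{p}u^{p}\,\pi^{\star}\sigma_{p}$. Applying the defining properties of the pullback connection, namely the Leibniz rule together with $\pi^{\star}D_{X}(\pi^{\star}\sigma_{p})=\pi^{\star}\big(D_{d\pi(X)}\sigma_{p}\big)$, yields
$$\pi^{\star}D_{X}\xi=\sum_{p}X(u^{p})\,\pi^{\star}\sigma_{p}+\sum_{p}u^{p}\,\pi^{\star}\big(D_{d\pi(X)}\sigma_{p}\big).$$

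I then treat the two types of frame field separately. For a vertical lift $X=\sigma_{q}^{v}$ one has $d\pi(X)=0$, so the second sum drops out, while $\sigma_{q}^{v}(u^{p})=\delta_{q}^{p}$ since $u^{p}$ is linear along the fibre and $\sigma_{q}^{v}=\frac{d}{dt}\big|_{t=0}(e+t\sigma_{q})$; hence $\pi^{\star}D_{\sigma_{q}^{v}}\xi=\pi^{\star}\sigma_{q}$, which is exactly $(\sigma_{q}^{v})^{V}$ read through the identification $\mathcal{V}\cong\pi^{\star}E$. For a horizontal lift $X=e_{i}^{h}$ the vertical component vanishes, so I must show the right-hand side above is zero. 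The cleanest way is to observe that a horizontal curve in $E$ is precisely a $D$-parallel section of $E$ along its projection to $M$, and $\xi$ restricted to such a curve is that very parallel section; thus $\pi^{\star}D_{X^{H}}\xi=0$ without any appeal to Christoffel symbols. As a cross-check one may instead write $e_{i}^{h}=\partial_{x^{i}}-\sum_{p,q}u^{p}\Gamma_{ip}^{q}\partial_{u^{q}}$ in natural coordinates, compute $e_{i}^{h}(u^{q})=-\sum_{p}u^{p}\Gamma_{ip}^{q}$ and $\sum_{p}u^{p}\,\pi^{\star}(D_{\partial_{x^{i}}}\sigma_{p})=\sum_{q}\big(\sum_{p}u^{p}\Gamma_{ip}^{q}\big)\pi^{\star}\sigma_{q}$, and note that the two contributions cancel---this cancellation being structural, since the horizontal subspace is defined precisely as $\ker K$.

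The only genuinely delicate point is to keep the identification $\mathcal{V}\cong\pi^{\star}E$ tracked consistently, so that the section $\pi^{\star}\sigma_{q}$ obtained on the $\pi^{\star}E$ side is correctly read back as the vertical vector $\sigma_{q}^{v}$ on the $\mathcal{V}$ side; once that dictionary is fixed, the two verified cases assemble, by the tensoriality noted at the outset, into the stated identity $\pi^{\star}D_{X}\xi=X^{V}$ for all $X\in\mathfrak{X}(E)$.
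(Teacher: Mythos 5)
Your proof is correct, but note that the paper itself offers nothing to compare it against: Lemma \ref{Der-xi} is stated without proof and deferred to \cite{Alb1}, so your argument is a genuine filling-in rather than a parallel to an in-paper derivation. The structure you use is sound: tensoriality of both sides in $X$ legitimately reduces the claim to the frame $\{e_i^h,\sigma_p^v\}$; the local expression $\xi=\sum_p u^p\,\pi^{\star}\sigma_p$ together with the defining property $\pi^{\star}D_{X}(\pi^{\star}\sigma_{p})=\pi^{\star}(D_{d\pi(X)}\sigma_{p})$ gives the correct Leibniz expansion; the vertical case $\sigma_q^v(u^p)=\delta_q^p$ and the horizontal cancellation against the Christoffel terms both check out. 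One remark that would tighten the write-up: your two cases are really a single statement, namely $\pi^{\star}D_X\xi=K(X)$ for the connection map $K$. Indeed, restricting $\xi$ along any curve $c$ in $E$ gives the section $c$ itself of $E$ along $\pi\circ c$, so $\pi^{\star}D_{\dot c}\xi=\tfrac{D}{dt}(c)=K(\dot c)$ by the very definition of $K$; since $K$ vanishes exactly on $\mathcal{H}$ and realizes the isomorphism $\mathcal{V}\cong\pi^{\star}E$, the identity $K(X)=X^{V}$ is immediate and the frame computation becomes a cross-check rather than the proof. This also disposes of the "delicate point" you flag at the end, because the identification $\mathcal{V}\cong\pi^{\star}E$ is implemented by $K$ itself, so there is no dictionary left to track.
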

	
	\begin{proposition}\label{Har-radial}(see \cite{Abb-Lak5})
		Let $\alpha:\mathbb{R}^{+}\longrightarrow \mathbb{R}$ be a smooth function on $\mathbb{R}^{+}$, and further let $F:E\longrightarrow \mathbb{R}$ be the $r$-radial function induced from $\alpha$. Then
		{\small
			$$\Delta_{G}F=4e^{-2\varphi_{2}}\Big\{r\alpha''+\big(mr\varphi_{1}'+(k-2)r\varphi_{2}'+\frac{k}{2}\big)\alpha'\Big\}.$$ 
		}
	\end{proposition}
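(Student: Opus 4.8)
The plan is to compute the Laplacian directly from the definition $\Delta_G F = div(\tilde{\nabla} F)$, exploiting the fact that the gradient of an $r$-radial function has already been recorded in the bulleted formulas preceding the statement. There we have $\tilde{\nabla} F = 2e^{-2\varphi_{2}}\alpha'\xi$, so the entire computation reduces to taking the divergence of a scalar multiple of the tautological field $\xi$. Setting $\mu := 2e^{-2\varphi_{2}}\alpha'$, a function of $r$ alone, I would apply the Leibniz rule $div(\mu\xi) = \xi(\mu) + \mu\, div(\xi)$, which splits the task into two tractable pieces.

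For the first piece, $\xi(\mu)$, I would observe that $\xi$ is a vertical field and that $\xi^{\flat}(\xi) = h(e,e) = r$, so that Lemma \ref{Der-fun}(ii) applies to each factor depending on $r$: for any smooth function $\beta$ of $r$ one gets $\xi(\beta(r)) = 2\beta'(r)\,\xi^{\flat}(\xi) = 2r\beta'(r)$. Applying this with $\beta = e^{-2\varphi_{2}}$ (whose $r$-derivative is $-2\varphi_{2}'e^{-2\varphi_{2}}$) and with $\beta = \alpha'$, together with the product rule, gives $\xi(\mu) = 2e^{-2\varphi_{2}}\big(2r\alpha'' - 4r\varphi_{2}'\alpha'\big)$.

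For the second piece I would substitute the already-established formula $div(\xi) = 2mr\varphi_{1}' + (1+2r\varphi_{2}')k$, so that $\mu\, div(\xi) = 2e^{-2\varphi_{2}}\alpha'\big(2mr\varphi_{1}' + k + 2kr\varphi_{2}'\big)$. Summing the two contributions, factoring out $4e^{-2\varphi_{2}}$, and collecting the coefficient of $\alpha'$ — where the $-2r\varphi_{2}'$ coming from $\xi(\mu)$ combines with the $kr\varphi_{2}'$ coming from $\mu\, div(\xi)$ to produce $(k-2)r\varphi_{2}'$ — yields precisely the asserted expression. Since every step invokes a result recorded earlier in the excerpt, no genuine obstacle arises; the only point requiring care is the correct chain-rule differentiation through $\varphi_{2}$ combined with the identity $\xi^{\flat}(\xi) = r$, which is exactly what converts all $\xi$-derivatives into ordinary $r$-derivatives and makes the final bookkeeping of the $\alpha'$-coefficient routine.
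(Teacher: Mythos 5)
Your proof is correct; note that the paper itself gives no proof of this proposition (it is quoted from \cite{Abb-Lak5}), but your method --- writing $\Delta_G F=\operatorname{div}(\mu\xi)$ with $\mu=2e^{-2\varphi_2}\alpha'$, splitting by the Leibniz rule into $\xi(\mu)+\mu\operatorname{div}(\xi)$, and converting $\xi$-derivatives into $r$-derivatives via $\xi^{\flat}(\xi)=r$ --- is precisely the technique the paper itself employs in the analogous computations (compare $\Delta_{G}L=\xi(S)+S\operatorname{div}(\xi)$ in the proof of Proposition \ref{Bihar-radial}). All intermediate identities, including $\xi(\mu)=2e^{-2\varphi_2}(2r\alpha''-4r\varphi_2'\alpha')$ and the combination $(-2+k)r\varphi_2'$ in the $\alpha'$-coefficient, check out.
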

	Using the previous expression of the Laplacian $\Delta_{G}(F)$, we find:
	\begin{proposition}\label{Bihar-radial}
		Let $\alpha:\mathbb{R}^{+}\longrightarrow \mathbb{R}$ be a smooth function on $\mathbb{R}^{+}$, and let $F:E\longrightarrow \mathbb{R}$ be the $r$-radial function induced from $\alpha$, then
		{\small
			\begin{align*}
				\Delta_{G}^2(F)&=-16\varphi_{2}^{'}e^{-4\varphi_{2}}\Big(r\alpha^{''}+(mr\varphi_{1}^{'}+(k-2)r\varphi_{2}^{'}+\frac{k}{2})\alpha^{'}\Big)\alpha^{'}\\
				&+4e^{-2\varphi_{2}}\bigg[e^{-2\varphi_{2}}\bigg(r\alpha^{(3)}+\Big(m\varphi_{1}^{'}+mr\varphi_{1}^{''}+(k-2)\varphi_{2}^{'}+r(k-2)\varphi_{2}^{''}\Big)\\ 
				&\hspace{5cm}+\Big(mr\varphi_{1}^{'}+r(k-2)\varphi_{2}^{'}+\frac{k}{2}+1\Big)\alpha^{''}\bigg)\bigg]div(\xi)\\
				&-8\varphi_{2}^{'}re^{-2\varphi_{2}}\bigg(r\alpha^{(3)}+\Big(m\varphi_{1}^{'}+mr\varphi_{1}^{''}+(k-2)\varphi_{2}^{'}+r(k-2)\varphi_{2}^{''}\Big)\alpha^{'}\\
				&\hspace{6cm}+\Big(mr\varphi_{1}^{'}+r(k-2)\varphi_{2}^{'}+\frac{k}{2}+1\Big)\alpha^{''}\bigg)\\ 
				&+4re^{-2\varphi_{2}}\bigg(\alpha^{(3)}+r\alpha^{(4)}+\Big(2m\varphi_{1}^{''}+mr\varphi_{1}^{(3)}
				+2(k-2)\varphi_{2}^{''}+r(k-2)\varphi_{2}^{(3)}\Big)\alpha^{'}\\
				&\qquad\qquad\qquad\qquad+2\Big(m\varphi_{1}^{'}+mr\varphi_{1}^{''}+(k-2)\varphi_{2}^{'}+r(k-2)\varphi_{2}^{''}\Big)\alpha^{''}\\
				&\qquad\qquad\qquad\qquad+\Big(mr\varphi_{1}^{'}+r(k-2)\varphi_{2}^{'}+\frac{k}{2}+1\Big)\alpha^{(3)}\bigg)\\
				&-8r\varphi_{2}^{'}e^{-6\varphi_{2}}\bigg(r\alpha^{(3)}+\Big(m\varphi_{1}^{'}+mr\varphi_{1}^{''}+(k-2)\varphi_{2}^{'}+r(k-2)\varphi_{2}^{''}\Big)\alpha^{'}\\
				&\qquad\qquad\qquad\qquad+\Big(mr\varphi_{1}^{'}+r(k-2)\varphi_{2}^{'}+\frac{k}{2}+1\Big)\alpha^{''}\bigg).
			\end{align*}
		}
	\end{proposition}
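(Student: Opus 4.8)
The plan is to compute $\Delta_G^2 F = \Delta_G(\Delta_G F)$ by feeding the expression for $\Delta_G F$ supplied by Proposition~\ref{Har-radial} back into the divergence-of-gradient definition $\Delta_G = div\circ\tilde\nabla$. The key structural observation, which makes the whole computation tractable, is that $\Delta_G F$ is \emph{again} an $r$-radial function. Indeed, abbreviating $Q := mr\varphi_1' + (k-2)r\varphi_2' + \frac{k}{2}$ and $P := r\alpha'' + Q\alpha'$, Proposition~\ref{Har-radial} reads $\Delta_G F = 4e^{-2\varphi_2}P$, and every factor on the right-hand side ($e^{-2\varphi_2}$, the coefficient $Q$, and the derivatives of $\alpha$) depends on $e$ only through $r=h(e,e)$. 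Thus the second application of $\Delta_G$ is governed by exactly the radial machinery used to prove Proposition~\ref{Har-radial}.

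First I would compute the gradient $\tilde\nabla(\Delta_G F)$. Applying the Leibniz rule together with the gradient formula $\tilde\nabla(\mu(r)) = 2e^{-2\varphi_2}\mu'\,\xi$ recorded in the preliminaries, one finds that $\tilde\nabla(\Delta_G F)$ is once more a radial multiple of the tautological field $\xi$, namely $\tilde\nabla(\Delta_G F) = 8e^{-4\varphi_2}\big(P' - 2\varphi_2' P\big)\,\xi$. Here the first derivative $P' = r\alpha^{(3)} + (1+Q)\alpha'' + Q'\alpha'$ enters, which is responsible for the appearance of $\alpha^{(3)}$ and of the second derivatives $\varphi_1'',\varphi_2''$ (through $Q'$). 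That the gradient stays proportional to $\xi$, with no horizontal component, is guaranteed by Lemma~\ref{Der-fun}(i) and is what keeps the calculation closed within the class of radial objects.

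Next I would take the divergence by means of the Leibniz rule $div(g\,\xi) = \xi(g) + g\,div(\xi)$, valid for any radial function $g$. The term $\xi(g)$ is evaluated with Lemma~\ref{Der-fun}(ii), which gives $\xi(\mu(r)) = 2r\mu'$ for a radial function; differentiating $g$ a second time now produces $P''$, hence the top-order derivative $\alpha^{(4)}$ together with the third derivatives $\varphi_1^{(3)},\varphi_2^{(3)}$. The term $g\,div(\xi)$ is kept in terms of $div(\xi) = 2mr\varphi_1' + (1+2r\varphi_2')k$, precisely as it figures in the statement. Collecting the two contributions and sorting the result by the order of differentiation of $\alpha$ yields the asserted fourth-order linear expression.

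The hard part is not conceptual but is the bookkeeping: each differentiation acts on a product of $e^{-2\varphi_2}$, the weight-built coefficient $Q$, powers of $r$, and derivatives of $\alpha$, so two successive differentiations spawn a large number of terms that must be tracked consistently up to $\alpha^{(4)}$ and up to third derivatives of $\varphi_1,\varphi_2$. To keep this manageable I would carry the shorthands $P$ and $Q$ throughout, differentiate them symbolically via $P' = r\alpha^{(3)} + (1+Q)\alpha'' + Q'\alpha'$ and the analogous formula for $P''$, and expand $Q',Q''$ in terms of the weight functions only at the very end; the radiality of every intermediate quantity, ensured by Lemma~\ref{Der-fun}, is what prevents any mixed horizontal or non-radial contribution from entering and thus guarantees that the expression closes on itself.
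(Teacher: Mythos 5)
Your strategy is the right one and, at its core, it coincides with the paper's: both proofs iterate the Laplacian on the radial function $\Delta_G F=4e^{-2\varphi_2}P$ using only the radial gradient formula $\tilde{\nabla}(\mu(r))=2e^{-2\varphi_2}\mu'\,\xi$, the Leibniz rule $div(g\xi)=\xi(g)+g\,div(\xi)$ together with $\xi(\mu(r))=2r\mu'$, and the formula for $div(\xi)$. The difference is bookkeeping: the paper keeps the product structure $4e^{-2\varphi_2}\cdot L$ (your $P$) and expands via $\Delta_G(uv)=\Delta_G(u)v+u\Delta_G(v)+2G(\tilde{\nabla}u,\tilde{\nabla}v)$, which obliges it to evaluate the cross term $G(\tilde{\nabla}(e^{-2\varphi_2}),\tilde{\nabla}L)$ and, in principle, $\Delta_G(e^{-2\varphi_2})$ (a term its proof never actually computes); your single-pass $div(\tilde{\nabla}(\Delta_GF))$ absorbs $e^{-2\varphi_2}$ into the radial function and avoids both. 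Your intermediate formulas are correct: $\tilde{\nabla}(\Delta_GF)=8e^{-4\varphi_2}(P'-2\varphi_2'P)\,\xi$ and $P'=r\alpha^{(3)}+(1+Q)\alpha''+Q'\alpha'$.

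The genuine problem is your last claim, that sorting the terms ``yields the asserted fourth-order linear expression''. It will not. A faithful execution of your plan gives
\[
\Delta_G^2F=16re^{-4\varphi_2}\Big(P''-6\varphi_2'P'+\big(8(\varphi_2')^2-2\varphi_2''\big)P\Big)+8e^{-4\varphi_2}\big(P'-2\varphi_2'P\big)\,div(\xi),
\]
in which every term carries the factor $e^{-4\varphi_2}$ --- as it must, since each gradient contributes one factor $e^{-2\varphi_2}$ and the only metric pairing involved is $G(\xi,\xi)=re^{2\varphi_2}$. The displayed statement cannot be brought to this form: its groups carry the prefactors $e^{-2\varphi_2}$, $e^{-4\varphi_2}$ and $e^{-6\varphi_2}$ simultaneously; its first group ends in a stray $\alpha'$ where $div(\xi)$ would be needed to match the term $-16\varphi_2'e^{-4\varphi_2}P\,div(\xi)$ above; and its $div(\xi)$ group is missing an $\alpha'$. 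These inconsistencies originate in the paper's own computation (a factor $2$ is dropped in passing to the last line of its formula for $\tilde{\nabla}L$, and $G(\xi,\xi)$ is evaluated as $r$ instead of $re^{2\varphi_2}$), so they are not a defect of your method; but it does mean that your proof, carried out correctly, terminates at the corrected formula above rather than at the proposition as printed. As a sanity check, in the Sasaki case ($\varphi_1=\varphi_2=0$) the corrected formula yields $\Delta_G^2F=16r^2\alpha^{(4)}+16(k+2)r\alpha^{(3)}+4k(k+2)\alpha''$, which can be confirmed directly on the flat model $E=\mathbb{R}^m\times\mathbb{R}^k$; so you should present the derived expression itself rather than assert literal agreement with the statement.
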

	
	\pagebreak
	\begin{proof}
		We have
		{\small
			\begin{align*}
				\Delta_{G}^2(F)&=4\Delta_{G}\Big(e^{-2\varphi_{2}}\big(r\alpha^{''}+(mr\varphi_{1}^{'}+(k-2)r\varphi_{2}^{'}+\frac{k}{2})\alpha^{'}\big)\Big)\\
				&=4\Big[\Delta_{G}(e^{-2\varphi_{2}})\Big(r\alpha^{''}+(mr\varphi_{1}^{'}+(k-2)r\varphi_{2}^{'}+\frac{k}{2})\alpha^{'}\Big)\\
				&\qquad\qquad +e^{-2\varphi_{2}}\Delta_{G}\big(r\alpha^{''}+(mr\varphi_{1}^{'}+(k-2)r\varphi_{2}^{'}+\frac{k}{2})\alpha^{'}\big)\\
				&\qquad\qquad +2G\big(\tilde{\nabla}(e^{-2\varphi_{2}}),\tilde{\nabla}[r\alpha^{''}+(mr\varphi_{1}^{'}+(k-2)r\varphi_{2}^{'}+\frac{k}{2})\alpha^{'}]\big)\Big].
			\end{align*}
		}
		
		Set
		{\small
			$$L=r\alpha^{''}+(mr\varphi_{1}^{'}+(k-2)r\varphi_{2}^{'}+\frac{k}{2})\alpha^{'}.$$
		}
		We have:
		{\small
			\begin{equation}\label{Cov_phi_2}
				\tilde{\nabla}(e^{-2\varphi_{2}})=-4\varphi_{2}^{'}e^{-4\varphi_{2}}\xi,
			\end{equation}
		}
		and
		{\small
			\begin{align}\label{Cov_L}
				\begin{split}
					\tilde{\nabla}L&=\tilde{\nabla}r.\alpha^{''}+r.\tilde{\nabla}\alpha^{''}+\big(m(\tilde{\nabla}r.\varphi_{1}^{'}+r\tilde{\nabla}\varphi_{1}^{'}) +(k-2)(\tilde{\nabla}r.\varphi_{2}^{'}+r\tilde{\nabla}\varphi_{2}^{'})\big)\alpha^{'}\\
					&\qquad\qquad+\big(mr\varphi_{1}^{'}+r(k-2)\varphi_{2}^{'}+\frac{k}{2}\big)\tilde{\nabla}\alpha^{'}\\
					&=2e^{-2\varphi_{2}}\alpha^{''}\xi+2re^{-2\varphi_{2}}\alpha^{(3)}\xi+2\alpha^{''}e^{-2\varphi_{2}}\big(mr\varphi_{1}^{'}+r(k-2)\varphi_{2}^{'}+\frac{k}{2}\big)\xi \\
					&\qquad \qquad + \big(m(2e^{-2\varphi_{2}}\varphi_{1}^{'}\xi +2r\varphi_{1}^{''}e^{-2\varphi_{2}}\xi)\\
					&\qquad\qquad\qquad\qquad+(k-2)(2\varphi_{2}^{'}e^{-2\varphi_{2}}\xi+2r\varphi_{2}^{''}e^{-2\varphi_{2}}\xi)\big)\alpha^{'}\\
					&=e^{-2\varphi_{2}}\bigg(r\alpha^{(3)}+\Big(m\varphi_{1}^{'}+mr\varphi_{1}^{''}+(k-2)\varphi_{2}^{'}+r(k-2)\varphi_{2}^{''}\Big)\alpha^{'}\\
					&\qquad\qquad\qquad\qquad\qquad\qquad+\Big(mr\varphi_{1}^{'}+r(k-2)\varphi_{2}^{'}+\frac{k}{2}+1\Big)\alpha^{''}\bigg)\xi,
				\end{split}
			\end{align}
		}
		by (\ref{Cov_phi_2}) and (\ref{Cov_L}), we obtain
		{\small
			\begin{align}\label{product_G}
				\begin{split}
					G\big(\tilde{\nabla}(e^{-2\varphi_{2}}),\tilde{\nabla}L\big)&=-4\varphi_{2}^{'}e^{-6\varphi_{2}}G(\xi,\xi)\bigg(r\alpha^{(3)}  +\Big(mr\varphi_{1}^{'}+r(k-2)\varphi_{2}^{'}+\frac{k}{2}+1\Big)\alpha^{''}\Big) \\
					& \qquad\qquad\qquad\qquad\qquad +\Big(m\varphi_{1}^{'}+mr\varphi_{1}^{''}+(k-2)\varphi_{2}^{'}+r(k-2)\varphi_{2}^{''}\bigg)\alpha^{'}\\
					&=-4r\varphi_{2}^{'}e^{-6\varphi_{2}}\bigg(r\alpha^{(3)}+ \Big(mr\varphi_{1}^{'}+r(k-2)\varphi_{2}^{'}+\frac{k}{2}+1\Big)\alpha^{''}\\
					& \qquad\qquad\qquad\qquad +\Big(m\varphi_{1}^{'}+mr\varphi_{1}^{''}+(k-2)\varphi_{2}^{'}+r(k-2)\varphi_{2}^{''}\Big)\alpha^{'}\bigg).
				\end{split}
			\end{align}
		}
		On the other hand, we have
		{\small
			\begin{align*}
				\Delta_{G}L&=div\bigg[
				e^{-2\varphi_{2}}\bigg(r\alpha^{(3)}+\Big(mr\varphi_{1}^{'}+r(k-2)\varphi_{2}^{'}+\frac{k}{2}+1\Big)\alpha^{''}\\
				&\qquad\qquad\qquad\qquad+\Big(m\varphi_{1}^{'}+mr\varphi_{1}^{''}+(k-2)\varphi_{2}^{'}+r(k-2)\varphi_{2}^{''}\Big)\alpha^{'}\bigg)\xi\bigg]\\
				&=\xi.\bigg[e^{-2\varphi_{2}}\bigg(r\alpha^{(3)}+\Big(mr\varphi_{1}^{'}+r(k-2)\varphi_{2}^{'}+\frac{k}{2}+1\Big)\alpha^{''}\\
				&\qquad\qquad\qquad\qquad+\Big(m\varphi_{1}^{'}+mr\varphi_{1}^{''}+(k-2)\varphi_{2}^{'}+r(k-2)\varphi_{2}^{''}\Big)\alpha^{'}\bigg)\bigg]\\
				&\qquad+\bigg[e^{-2\varphi_{2}}\bigg(r\alpha^{(3)}+\Big(mr\varphi_{1}^{'}+r(k-2)\varphi_{2}^{'}+\frac{k}{2}+1\Big)\alpha^{''}\\
				&\qquad\qquad\qquad\qquad+\Big(m\varphi_{1}^{'}+mr\varphi_{1}^{''}+(k-2)\varphi_{2}^{'}+r(k-2)\varphi_{2}^{''}\Big)\alpha^{'}\bigg)\bigg]div(\xi).
			\end{align*}
		}
		Set
		{\small
			\begin{align*}
				S&=e^{-2\varphi_{2}}\bigg(r\alpha^{(3)}+\Big(m\varphi_{1}^{'}+mr\varphi_{1}^{''}+(k-2)\varphi_{2}^{'}+r(k-2)\varphi_{2}^{''}\Big)\alpha^{'}\\
				&\qquad\qquad\qquad\qquad+\Big(mr\varphi_{1}^{'}+r(k-2)\varphi_{2}^{'}+\frac{k}{2}+1\Big)\alpha^{''}\bigg),
			\end{align*}
		}
		thus 
		{\small
			\begin{align*}
				\xi(S)&=\xi.\bigg[e^{-2\varphi_{2}}\bigg(r\alpha^{(3)}+\Big(m\varphi_{1}^{'}+mr\varphi_{1}^{''}+(k-2)\varphi_{2}^{'} +r(k-2)\varphi_{2}^{''}\Big)\alpha^{'}\\
				&\qquad\qquad\qquad\qquad+\Big(mr\varphi_{1}^{'}+r(k-2)\varphi_{2}^{'}+\frac{k}{2}+1\Big)\alpha^{''}\bigg)\bigg]\\
				&=2\xi.(e^{-2\varphi_{2}})\bigg(r\alpha^{(3)}+\Big(m\varphi_{1}^{'}+mr\varphi_{1}^{''}+(k-2)\varphi_{2}^{'}+r(k-2)\varphi_{2}^{''}\Big)\alpha^{'}\\
				&\qquad\qquad\qquad\qquad+\Big(mr\varphi_{1}^{'}+r(k-2)\varphi_{2}^{'}+\frac{k}{2}+1\Big)\alpha^{''}\bigg)\\
				&\qquad\qquad+2e^{-2\varphi_{2}}\bigg(\xi.(r)\alpha^{(3)}+r\xi.(\alpha^{(3)})+\Big(m\xi.(\varphi_{1}^{'})+m\xi.(r)\varphi_{1}^{''}+mr\xi.(\varphi_{1}^{''})\\
				&\qquad\qquad\qquad\qquad\qquad\qquad+(k-2)\xi.(\varphi_{2}^{'})+\xi.(r)(k-2)\varphi_{2}^{''}+r(k-2)\xi(\varphi_{2}^{''})\Big)\alpha^{'}\\
				&\qquad\qquad\qquad\qquad+\Big(m\varphi_{1}^{'}+mr\varphi_{1}^{''}+(k-2)\varphi_{2}^{'}+r(k-2)\varphi_{2}^{''}\Big)\xi.(\alpha^{'})\\
				&\qquad\qquad\qquad\qquad+\Big(m\xi.(r)\varphi_{1}^{'}+mr\xi.(\varphi_{1}^{'})+\xi(r)(k-2)\varphi_{2}^{'}+r(k-2)\xi.(\varphi_{2}^{'})\Big)\alpha^{''}\\
				&\qquad\qquad\qquad\qquad+\Big(mr\varphi_{1}^{'}+r(k-2)\varphi_{2}^{'}+\frac{k}{2}+1\Big)\xi.(\alpha^{''})\bigg)\\
				&=-8\varphi_{2}^{'}re^{-2\varphi_{2}}\bigg(r\alpha^{(3)}+\Big(m\varphi_{1}^{'}+mr\varphi_{1}^{''}+(k-2)\varphi_{2}^{'} +r(k-2)\varphi_{2}^{''}\Big)\alpha^{'}\\
				&\qquad\qquad\qquad\qquad\qquad\qquad+\Big(mr\varphi_{1}^{'}+r(k-2)\varphi_{2}^{'}+\frac{k}{2}+1\Big)\alpha^{''}\bigg)\\
				&\qquad+2e^{-2\varphi_{2}}\bigg(2r\alpha^{(3)}+2r^{2}\alpha^{(4)}+\Big(2mr\varphi_{1}^{''}+2mr\varphi_{1}^{''}+2mr^{2}\varphi_{1}^{(3)}\\
				&\qquad\qquad\qquad\qquad\qquad\qquad\qquad\qquad\qquad+2r(k-2)\varphi_{2}^{''}+2r^{2}(k-2)\varphi_{2}^{(3)}\Big)\alpha^{'}\\
				&\qquad\qquad\qquad\qquad+2r\Big(m\varphi_{1}^{'}+mr\varphi_{1}^{''}+(k-2)\varphi_{2}^{'}+r(k-2)\varphi_{2}^{''}\Big)\alpha^{''}\\
				&\qquad\qquad\qquad\qquad+\Big(2mr\varphi_{1}^{'}+2mr^{2}\varphi_{1}^{''}+2r(k-2)\varphi_{2}^{'}+2r^{2}(k-2)\varphi_{2}^{''}\Big)\alpha^{''}\\
				&\qquad\qquad\qquad\qquad+2r\Big(mr\varphi_{1}^{'}+r(k-2)\varphi_{2}^{'}+\frac{k}{2}+1\Big)\alpha^{(3)}\bigg).
			\end{align*}
		}
		Then
		{\small
			\begin{align}\label{Lap_L}
				\begin{split}
					\Delta_{G}L&=\bigg[
					e^{-2\varphi_{2}}\bigg(r\alpha^{(3)}+\Big(m\varphi_{1}^{'}+mr\varphi_{1}^{''}+(k-2)\varphi_{2}^{'}+r(k-2)\varphi_{2}^{''}\Big)\alpha^{'}\\
					&\qquad\qquad\qquad\qquad+\Big(mr\varphi_{1}^{'}+r(k-2)\varphi_{2}^{'}+\frac{k}{2}+1\Big)\alpha^{''}\bigg)\bigg]div(\xi)\\
					&-8\varphi_{2}^{'}re^{-2\varphi_{2}}\bigg(r\alpha^{(3)}+\Big(m\varphi_{1}^{'}+mr\varphi_{1}^{''}+(k-2)\varphi_{2}^{'}+r(k-2)\varphi_{2}^{''}\Big)\alpha^{'}\\
					&\qquad\qquad\qquad\qquad\qquad+\Big(mr\varphi_{1}^{'}+r(k-2)\varphi_{2}^{'}+\frac{k}{2}+1\Big)\alpha^{''}\bigg)\\
					&+2e^{-2\varphi_{2}}\bigg(2r\alpha^{(3)}+2r^{2}\alpha^{(4)}+\Big(2mr\varphi_{1}^{''}+2mr\varphi_{1}^{''}+2mr^{2}\varphi_{1}^{(3)}\\
					&\qquad\qquad\qquad\qquad\qquad\qquad\qquad+2r(k-2)\varphi_{2}^{''}+2r^{2}(k-2)\varphi_{2}^{(3)}\Big)\alpha^{'}\\
					&\qquad\qquad\qquad+2r\Big(m\varphi_{1}^{'}+mr\varphi_{1}^{''}+(k-2)\varphi_{2}^{'}+r(k-2)\varphi_{2}^{''}\Big)\alpha^{''}\\
					&\qquad\qquad\qquad+\Big(2mr\varphi_{1}^{'}+2mr^{2}\varphi_{1}^{''}+2r(k-2)\varphi_{2}^{'}+2r^{2}(k-2)\varphi_{2}^{''}\Big)\alpha^{''}\\
					&\qquad\qquad\qquad+2r\Big(mr\varphi_{1}^{'}+r(k-2)\varphi_{2}^{'}+\frac{k}{2}+1\Big)\alpha^{(3)}\bigg).
				\end{split}    
			\end{align}
		}
		
		Finally, by (\ref{product_G}) and (\ref{Lap_L}), we obtain the desired equation.
	\end{proof}
	
	\begin{remark}
		It is clear that the expression of the bilaplacian of an $r$-radial function is a highly complex fourth order linear differential equation with variable coefficients. Solving of such a differential equation is a problem which cannot be resolved using the classical approaches to differential equations. We are forced to make certain simplifications in order to give examples of biharmonic functions.
	\end{remark}
	
	\begin{proposition}\label{Sasaki_radial}
		Assume that $\varphi_1=\varphi_2=0$ (i.e., $G$ is the Sasaki metric). Let $F:E\longrightarrow \mathbb{R}$ be an $r$-radial function constructed from a function $\alpha:[0,+\infty[\longrightarrow \mathbb{R}$, then $F$ is biharmonic if and only if $\alpha$ satisfies the linear differential equation 
		\begin{equation}\label{Bi-Rrad}
			2r^{2}\alpha^{(4)}+(3k+4)r\alpha^{(3)}+k(k+2)\alpha^{(2)}=0.
		\end{equation} 
	\end{proposition}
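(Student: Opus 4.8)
The plan is to read Proposition~\ref{Sasaki_radial} off from the general bilaplacian formula of Proposition~\ref{Bihar-radial} by specializing it to $\varphi_1=\varphi_2\equiv 0$. The decisive feature of the Sasaki case is that every derivative $\varphi_1^{(j)}$ and $\varphi_2^{(j)}$ with $j\geq 1$ vanishes identically, so that $e^{-2\varphi_1}=e^{-2\varphi_2}=e^{-4\varphi_2}=e^{-6\varphi_2}=1$, while the formula $div(\xi)=2mr\varphi_1'+(1+2r\varphi_2')k$ collapses to $div(\xi)=k$. First I would insert these data into the five groups of terms constituting $\Delta_G^2(F)$ in Proposition~\ref{Bihar-radial}.

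Next I would observe that three of those five groups carry an explicit factor $\varphi_2'$ and therefore drop out entirely, so that only the $div(\xi)$-group and the group $4re^{-2\varphi_2}(\cdots)$ survive. Using $div(\xi)=k$, the first reduces to $4k\big(r\alpha^{(3)}+(\tfrac{k}{2}+1)\alpha''\big)$ and the second to $4r\big(r\alpha^{(4)}+(\tfrac{k}{2}+2)\alpha^{(3)}\big)$. Adding them and collecting by the order of differentiation yields
\[
\Delta_G^2(F)=4r^2\alpha^{(4)}+(6k+8)r\alpha^{(3)}+2k(k+2)\alpha'',
\]
which is precisely twice the left-hand side of \eqref{Bi-Rrad}. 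Consequently $F$ is biharmonic, that is $\Delta_G^2(F)=0$, exactly when $\alpha$ satisfies \eqref{Bi-Rrad} after cancelling the overall factor $2$.

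Two points then finish the argument. For the equivalence one must note that $\Delta_G^2(F)$ is again an $r$-radial function, being a function of $r=h(e,e)$ alone; since on a bundle of rank $k\geq 1$ this coordinate attains every value in $[0,+\infty)$ along the fibres, $\Delta_G^2(F)$ vanishes on all of $E$ if and only if the associated function of $r$ vanishes for every $r\geq 0$, which is exactly \eqref{Bi-Rrad}. The only genuine obstacle I foresee is the coefficient bookkeeping: one must apply the chain rule correctly when differentiating the products $r\alpha''$ and $\tfrac{k}{2}\alpha'$, and be scrupulous in using $\xi(r)=2r$ and, more generally, $\xi(g(r))=2r\,g'(r)$ (Lemma~\ref{Der-fun}) whenever a derivative along $\xi$ is taken, so that no factor of $2$ is accidentally lost in passing from the five-term expression to the final second-order relation in $\alpha''$.
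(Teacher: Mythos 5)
Your proof is correct and follows essentially the same route as the paper: specialize Proposition~\ref{Bihar-radial} to $\varphi_1=\varphi_2=0$, note that the three groups carrying a factor $\varphi_2'$ vanish and that $div(\xi)=k$, and collect the surviving terms into $4r^2\alpha^{(4)}+2(3k+4)r\alpha^{(3)}+2k(k+2)\alpha''$, which is twice the left-hand side of \eqref{Bi-Rrad}. Your closing observation --- that the equivalence holds because $r=h(e,e)$ sweeps out all of $[0,+\infty)$ along the fibres, so vanishing of $\Delta_G^2F$ on $E$ is the same as the ODE holding for all $r\geq 0$ --- is a small point the paper leaves implicit, but it does not change the nature of the argument.
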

	\begin{proof}
		Using the formula of Proposition \ref{Bihar-radial}, the $r$-radial function $F$ is biharmonic if and only if $\Delta_G^2 F=0$, where
		\begin{align*}\label{Eq1}
			\Delta_{G}^2F&=4
			\bigg(r\alpha^{(3)}+\Big(\frac{k}{2}+1\Big)\alpha^{''}\bigg)\operatorname{div}(\xi)+2\bigg(2r\alpha^{(3)}+2r^{2}\alpha^{(4)}+2r\Big(\frac{k}{2}+1\Big)\alpha^{(3)}\bigg).
		\end{align*}
		Using the expression of the divergence of the tautological vector field $\xi$, we obtain that $\operatorname{div}(\xi)=k$, hence 
		\begin{equation*}
			\Delta_{G}^2F=4r^{2}\alpha^{(4)}+(4kr+4r+2r(k+2))\alpha^{(3)}+2k(k+2)\alpha^{(2)},
		\end{equation*}
		which can be simplified as 
		\begin{equation*}
			\Delta_{G}^2F=4r^{2}\alpha^{(4)}+2(3k+4)r\alpha^{(3)}+2k(k+2)\alpha^{(2)}.
		\end{equation*}
		This gives the desired equation.
	\end{proof}
	
	\begin{corollary}\label{Sasaki_radial2}
		Assume that $\varphi_1=\varphi_2=0$. Let $F:E\longrightarrow \mathbb{R}$ be an $r$-radial function constructed from a function $\alpha:[0,+\infty[\longrightarrow \mathbb{R}$. If $\alpha$ is a polynomial function of degree $1$, then $F$ is biharmonic.
	\end{corollary}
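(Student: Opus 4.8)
The plan is to invoke the characterization already established in Proposition~\ref{Sasaki_radial}: under the assumption $\varphi_1=\varphi_2=0$, the $r$-radial function $F$ induced from $\alpha$ is biharmonic if and only if the profile $\alpha$ solves the fourth-order linear ODE
\begin{equation*}
	2r^{2}\alpha^{(4)}+(3k+4)r\alpha^{(3)}+k(k+2)\alpha^{(2)}=0.
\end{equation*}
Thus the entire problem reduces to checking that a degree-one polynomial satisfies this equation, so no genuine analysis of the ODE is required.

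First I would write $\alpha(r)=ar+b$ for constants $a,b\in\mathbb{R}$ and simply differentiate: one has $\alpha^{\prime}(r)=a$ and $\alpha^{(j)}(r)=0$ for every $j\geq 2$. In particular $\alpha^{(2)}$, $\alpha^{(3)}$ and $\alpha^{(4)}$ all vanish identically on $[0,+\infty[$. Substituting these into the displayed equation makes every term vanish, so the equation is satisfied trivially for all $r$. By Proposition~\ref{Sasaki_radial}, this is exactly the condition for $F$ to be biharmonic, which completes the argument.

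There is essentially no obstacle to overcome here: the content of the corollary is entirely carried by Proposition~\ref{Sasaki_radial}, and the present statement is the observation that the biharmonicity ODE involves only the second and higher derivatives of $\alpha$, hence is automatically killed by affine profiles. The only point worth noting is a regularity remark, namely that an affine $\alpha$ is manifestly smooth on $[0,+\infty[$ together with all its derivatives, so it meets the standing hypotheses imposed on profiles of $r$-radial functions, and the induced $F$ is a bona fide smooth function on $E$.
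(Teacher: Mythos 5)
Your proposal is correct and follows exactly the paper's own argument: both reduce the claim to Proposition~\ref{Sasaki_radial} and observe that an affine $\alpha$ has $\alpha^{(2)}=\alpha^{(3)}=\alpha^{(4)}=0$, so equation~(\ref{Bi-Rrad}) holds trivially. The added regularity remark is a harmless (and reasonable) extra observation.
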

	\begin{proof}
		If $\alpha$ is a polynomial function of degree $1$, then $\alpha^{(2)}=\alpha^{(3)}=\alpha^{(4)}=0$, hence equation (\ref{Bi-Rrad}) is trivially verified.	
	\end{proof}
	
	\begin{example}
		In the case of the Sasaki metric, thanks to the previous corollary,
		we obtain a $2$-parameter family of biharmonic functions. Furthermore, in virtue of Proposition \ref{Har-radial}, an $r$-radial function $F$ constructed from a degree one polynomial function $\alpha:[0,+\infty[\longrightarrow \mathbb{R}$ is harmonic if and only if 
		$\alpha^\prime=0$. Hence by choosing a function $\alpha(r)=a r+b$ with $(a,b)\in \mathbb{R}^2$ and $a\neq 0$, we obtain a proper biharmonic function. Therefore, we obtain a $2$-parameter family of proper biharmonic functions.  
	\end{example} 
	
	In what follows, we shall try to find other solutions of  equation (\ref{Bi-Rrad}). First of all, we make the following change of variable in equation (\ref{Bi-Rrad}). Set $\psi=\alpha^{(2)}$. The equation becomes 
	\begin{equation}\label{Bi-Rrad2}
		2r^{2}\psi^{(2)}+(3k+4)r\psi^{\prime}+k(k+2)\psi=0.
	\end{equation}
	
	We remark that the order of derivation of $\psi$ and the power of $r$ in the equation are somehow related (the order of derivation of $\psi$ and the power of $r$ are the same for the three terms of the equation). This suggests that we can search for solutions of the of form $\psi(r)=\beta r^n$, where $\beta\in \mathbb{R}$ and $n\in \mathbb{Z}$. 
	
	A function of the form $\psi(r)=\beta r^n$, where $\beta\in \mathbb{R}$ and $n\in \mathbb{Z}^*$, is a solution of equation (\ref{Bi-Rrad2}) if and only if 
	\begin{equation}\label{Bi-Rrad3}
		2\beta n(n-1)r^{n}+\beta(3k+4)r^n+k(k+2)\beta r^n=0,
	\end{equation}
	or equivalently
	\begin{equation}\label{Bi-Rrad4}
		2n(n-1)+(3k+4)n+k(k+2)=0,
	\end{equation} 
	which can be rewritten as a quadratic equation in $n$ of the form 
	\begin{equation}\label{Bi-Rrad5}
		2n^2+(3k+2)n+k(k+2)=0.
	\end{equation}
	Hence the problem is reduced to an equation relating the power $n$ and the bundle rank $k$. 
	
	So, by fixing the bundle rank, we will end up solving a quadratic equation in $n$. This will give a procedure for constructing examples of biharmonic functions for different vector bundle ranks. We have the following possibilities for $k$:
	\begin{itemize}
		\item [$\bullet$] In case $k=1$ (i.e., line bundles): equation (\ref{Bi-Rrad5}) becomes $$2n^2+5n+3=0.$$
		The discriminant of the equation is $\Delta=1$, hence the equation has two solutions, $n_1=-1$ and $n_2=\frac{-3}{2}$. Therefore, by choosing the first solution, we have a family of solutions given as $\psi=\frac{\beta}{r}$. These are solutions on $]0,+\infty[$ which cannot be extended by continuity at zero. 
		\item [$\bullet$] In case $k=2$ (i.e., plane bundles): equation (\ref{Bi-Rrad5}) becomes $$2n^2+8n+8=0,$$
		which has a vanishing discriminant, therefore a unique solution $n=-2$. Hence a family of solutions, $\psi(r)=\frac{\beta}{r^2}$, on $]0,+\infty[$ with no extension by continuity at zero.
		\item [$\bullet$] $k\geq 3$: the discriminant of the quadratic equation (\ref{Bi-Rrad5}) is given by 
		\begin{align*}
			\Delta& =(3k+2)^2-8k(k+2)=9k^2+12k+4-8k^2-16k\\
			&=k^2-4k+4=(k-2)^2
		\end{align*}
		thus, we obtain two solutions 
		$$s_1=\frac{-(3k+2)-(k-2)}{4}=-k$$ and
		$$s_2=\frac{-(3k+2)+(k-2)}{4}=\frac{-k}{2}$$
		Hence when $k$ is even, we obtain two families of solutions given by $\beta r^{-k}$ and $\beta r^{-\frac{k}{2}}$, and when $k$ is odd, we obtain a single family solutions given by $\beta r^{-k}$. In all the previous cases, we obtain families of solutions on $]0,+\infty[$ with no extension by continuity to zero.
	\end{itemize}
	
	Finally, we integrate the solutions $\psi$ that we found:
	
	\begin{itemize}
		\item [$\bullet$] For $k=1$, we have $\psi(r)=\beta \frac{1}{r}$, hence $\alpha^\prime(r)=\beta \ln(r)+\gamma$, thus $\alpha(r)=\beta (r\ln(r)-r)+\gamma r+\delta$, where $\delta,\gamma,\beta$ are real constants with $\beta\neq 0$.
		\item [$\bullet$] For $k=2$, we have $\psi(r)=\beta r^{-2}$, thus we obtain $\alpha^\prime(r)=-\frac{\beta}{r}+\gamma$, hence $\alpha(r)=-\beta \ln(r)+\gamma r+\delta$, where $\delta,\gamma,\beta$ are real constants with $\beta\neq 0$.
		\item When $k\geq 3$, we separate the even and odd cases:
		\begin{itemize}
			\item In case $k$ is even, we obtain two families of solutions: $\psi_1(r)=\beta_1 r^{-k}$ and $\psi_2(r)=\beta_2 r^{\frac{-k}{2}}$. Thus, by integration, we find two families 
			\begin{align*}
				\alpha_1^\prime(r)=\frac{\beta_1}{-k+1}r^{1-k}+\gamma_1, \text{ and }
				\alpha_2^\prime(r)=\frac{\beta_2}{-\frac{k}{2}+1}r^{1-\frac{k}{2}}+\gamma_1,
			\end{align*}
			which gives 
			\begin{align*}
				\alpha_1(r)&=\frac{\beta_1}{(1-k)(2-k)}r^{2-k}+\gamma_1 r+\delta_1,\text{ and} \\
				\alpha_1(r)&=\frac{\beta_1}{(1-\frac{k}{2})(2-\frac{k}{2})}r^{2-\frac{k}{2}}+\gamma_2 r+\delta_2,
			\end{align*}
			where $\delta_1,\gamma_1,\beta_1,\delta_2,\gamma_2,\beta_2$ are real constants with $\beta\neq 0$ and $\beta\neq 0$.
			\item In case $k$ is odd: we obtain a family of solutions $\psi(r)=\beta r^{-k}$, hence 
			\[\alpha^\prime(r)=\frac{\beta}{1-k}r^{1-k}+\gamma,\]
			which gives 
			\[\alpha(r)=\frac{\beta}{(1-k)(2-k)}r^{2-k}+\gamma r+\delta,\]
			where $\delta,\gamma,\beta$ are real constants with $\beta\neq 0$.
		\end{itemize}
	\end{itemize}
	
	We remark that all the previous solutions  are defined on $\mathbb{R}^*_+$ with no extension to zero. Therefore, we obtain biharmonic functions on $E^*=E\setminus O$ where $O$ is the zero section. To summarize, by the previous study, we have:
	
	\begin{proposition}\label{Radial_Examples}
		Assume that $\varphi_1=\varphi_2=0$. Let $F:E\longrightarrow \mathbb{R}$ be an $r$-radial function constructed from a function $\alpha:[0,+\infty[\longrightarrow \mathbb{R}$.
		\begin{itemize}
			\item [$\bullet$] If $k=1$, then there exists a family of biharmonic functions on $E^*$ induced by the functions of the form 
			$$\alpha(r)=\beta (r\ln(r)-r)+\gamma r+\delta,$$ 
			where $\delta,\gamma,\beta$ are real constants with $\beta\neq 0$.
			\item [$\bullet$] If $k=2$, then there exists a family of biharmonic functions on $E^*$ induced by the functions of the form 
			$$\alpha(r)=-\beta \ln(r)+\gamma r+\delta,$$ 
			where $\delta,\gamma,\beta$ are real constants with $\beta\neq 0$.
			\item [$\bullet$] If $k\geq 3$, then we have two sub-cases: 
			\begin{itemize}
				\item If $k$ is even, there exist two families of biharmonic functions on $E^*$ induced by functions of the form 
				\begin{align*}
					\alpha_1(r)&=\frac{\beta_1}{(1-k)(2-k)}r^{2-k}+\gamma_1 r+\delta_1,\text{ and} \\
					\alpha_1(r)&=\frac{\beta_1}{(1-\frac{k}{2})(2-\frac{k}{2})}r^{2-\frac{k}{2}}+\gamma_2 r+\delta_2,
				\end{align*}
				where $\delta_1,\gamma_1,\beta_1$ and $\delta_2,\gamma_2,\beta_2$ are real constants with $\beta\neq 0$ and $\beta\neq 0$.
				\item If $k$ is odd, there exists a family of biharmonic functions on $E^*$ induced by functions of the form 
				$$\alpha(r)=\frac{\beta}{(1-k)(2-k)}r^{2-k}+\gamma r+\delta,$$ 
				where $\delta,\gamma,\beta$ are real constants with $\beta\neq 0$.
			\end{itemize}
		\end{itemize} 
	\end{proposition}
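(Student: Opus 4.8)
The plan is to reduce the biharmonicity condition to an ordinary differential equation in $\alpha$ and then integrate it explicitly. First I would invoke Proposition \ref{Sasaki_radial}: since we work with the Sasaki metric ($\varphi_1=\varphi_2=0$), the $r$-radial function $F(e)=\alpha(r)$ is biharmonic if and only if $\alpha$ solves the fourth-order linear equation (\ref{Bi-Rrad}), namely $2r^2\alpha^{(4)}+(3k+4)r\alpha^{(3)}+k(k+2)\alpha^{(2)}=0$. The structural observation that drives everything is that this equation involves only derivatives of $\alpha$ of order at least two, so the substitution $\psi=\alpha''$ lowers it to the second-order equation (\ref{Bi-Rrad2}); moreover, in each term the power of $r$ matches the order of the derivative of $\psi$, which identifies (\ref{Bi-Rrad2}) as an Euler (equidimensional) equation.

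Next I would solve that Euler equation by the standard power ansatz $\psi(r)=\beta r^{n}$. Substituting and dividing through by $r^{n}$ collapses the differential equation to the purely algebraic indicial condition (\ref{Bi-Rrad5}), $2n^2+(3k+2)n+k(k+2)=0$, whose discriminant I would compute and recognize as the perfect square $(k-2)^2$. This is the crux: because the discriminant factors so cleanly, the admissible exponents $n$ are rational expressions in the rank $k$, and the ensuing cases organize themselves entirely according to the arithmetic of these roots.

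I would then run the case analysis on $k$. The values $k=1$ and $k=2$ are treated directly, with $k=2$ yielding the degenerate double root forced by the vanishing discriminant; for $k\geq 3$ the two roots are given explicitly in terms of $k$, and the parity of $k$ decides whether one or both exponents are integers, hence how many families survive. In each surviving case I would recover $\alpha$ by integrating $\psi=\alpha''$ twice. The two constants of integration supply an affine term $\gamma r+\delta$, which by Corollary \ref{Sasaki_radial2} is itself biharmonic and may therefore be carried along freely; the remaining primitive of $\beta r^{n}$ produces the explicit power terms (or the logarithmic terms, which appear for the small ranks where a primitive passes through $r^{-1}$) recorded in the statement.

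The step I expect to be the main, if modest, obstacle is keeping the bookkeeping honest across the boundary of the case analysis and, more importantly, recording the correct domain. Every nonconstant solution obtained above has $\psi(r)=\beta r^{n}$ with $n<0$, so $\alpha$, and hence $F$, fails the regularity-at-the-origin requirement imposed on $r$-radial functions over all of $E$; consequently these functions are defined and biharmonic only on $E^{*}=E\setminus O$, which is exactly why the statement restricts to $E^{*}$. I would close by remarking that, since (\ref{Bi-Rrad}) is linear and Proposition \ref{Sasaki_radial} gives an exact equivalence, it suffices to exhibit these solutions: no verification beyond substituting each listed $\alpha$ back into (\ref{Bi-Rrad}) is required.
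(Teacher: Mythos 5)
Your proposal is correct and follows essentially the same route as the paper: reduce to equation (\ref{Bi-Rrad}) via Proposition \ref{Sasaki_radial}, substitute $\psi=\alpha''$ to get the Euler equation (\ref{Bi-Rrad2}), use the power ansatz $\psi=\beta r^{n}$ to reach the indicial equation (\ref{Bi-Rrad5}) with discriminant $(k-2)^2$, run the case analysis on $k$ (with parity deciding the number of integer exponents for $k\geq 3$), and integrate twice, noting that the resulting singularity at $r=0$ forces the restriction to $E^{*}$. Your closing remark that linearity plus the exact equivalence of Proposition \ref{Sasaki_radial} makes exhibiting the solutions sufficient is also exactly the logic the paper relies on.
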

	
	

	{\small\bibliography{AbbDouLak}}

	\EditInfo{August 4, 2023}{October 14, 2023}{Ilka Agricola}

\end{document}